\newtheorem{thm}{Theorem}
\newtheorem{prop}{Proposition}
\newtheorem{lem}{Lemma}
\numberwithin{equation}{section}
\numberwithin{prop}{section}
\numberwithin{lem}{section}
\numberwithin{thm}{section}
\newtheorem{cor}{Corollary}
\numberwithin{cor}{section}
\newtheorem{conj}{Conjecture}
\theoremstyle{definition}
\numberwithin{defn}{section}
\newtheorem{rem}{Remark}
\numberwithin{rem}{section}
\def \<{\langle}
\def \>{\rangle}
\def \a{\alpha }
\def \b{\beta }
\newcommand{\bea}{\begin{eqnarray}}
\newcommand{\eea}{\end{eqnarray}}
\newcommand{\be}{\begin {equation}}
\newcommand{\ee}{\end{equation}}
\newcommand{\g}{\mathfrak{g}}
\newcommand{\h}{\mathfrak{h}}
\newcommand{\C}{\Bbb C}
\newcommand{\Z}{\Bbb Z}
\newcommand{\xa}[2]{x^{\hat{\nu}}_{\alpha_{#1}}\left(#2\right)}
\newcommand{\xb}[1]{x^{\hat{\nu}}_{\alpha_1+\a_2}\left(#1\right)}
\newcommand{\hnu}{\hat{\nu}}
\newcommand{\n}{\mathfrak{n}}
\newcommand{\pt}{\psi_{\gamma,\theta}\tau_{\gamma,\theta}}
\begin{document}
\title[Principal Subspaces of Higher Level Standard $A_{2}^{(2)}$-Modules]{Presentations of Principal Subspaces of Higher Level Standard $A_2^{(2)}$-Modules}  
\author{Corina Calinescu, Michael Penn and  Christopher Sadowski}


\begin{abstract} 
We study the principal subspaces of higher level standard $A_2^{(2)}$-modules, extending earlier work in the level one case, by Calinescu$^{  \*}$, Lepowsky and Milas.  We prove natural presentations of principal subspaces and also of certain  related spaces. By using these presentations we obtain exact sequences, which yield recursions satisfied by the characters of the principal subspaces and related spaces. We conjecture a formula for a specialized character of the principal subspace, given by the Nahm sum of the inverse of the tadpole Cartan matrix.
\end{abstract}

\thanks{Corresponding author: Corina Calinescu, {\em E--mail address}: ccalinescu@citytech.cuny.edu}
\thanks{C.C was partially supported by the Simons Foundation Collaboration Grant for Mathematicians, and by PSC-CUNY Research Awards.}

\maketitle

\section{Introduction}
 The principal subspaces of standard (integrable highest weight) modules introduced in \cite{FS1}-\cite{FS2} have been studied by several authors from different standpoints. Our approach is based on vertex operator algebra theory (\cite{B}, \cite{FLM2}, \cite{FHL}, \cite{LL}). Algebraic and combinatorial properties, such as presentations, combinatorial bases and graded dimensions, of the principal subspaces of certain modules for untwisted affine Lie algebras were proved in \cite{CLM1}-\cite{CLM2}, \cite{CalLM1}-\cite{CalLM3}, \cite{Bu1}-\cite{Bu3} and other works. Analogous results in the case of twisted affine Lie algebras appear in in \cite{CalLM4}, \cite{CMP},  \cite{PS1}-\cite{PS2}, \cite{BS}, and in \cite{MP}, \cite{P}, \cite{PSW} for lattice vertex operator algebras. There are also ``commutative" principal subspaces studied in  \cite{Pr}, \cite{Je}, \cite{T1}-\cite{T2}, etc., and the quantum case was studied in \cite{Ko}.
 
In this paper, a continuation of \cite{CalLM4}, we switch our attention to level $k$ standard modules for $A_2^{(2)}$ and their principal subspaces, where $k$ is an integer with $k \geq 1$.  In this case we introduce new spaces, which we call {\it virtual subspaces}, since they are convenient for proving the results regarding the principal subspace. 
Denote by $\mathfrak{n}$ the Lie subalgebra of $A_2$ spanned by the root vectors for the positive roots, and by $\overline{\mathfrak{n}}[{\hnu}]$ an appropriate affinization of $\mathfrak{n}$. The virtual subspaces, denoted by $W_{k,i}$ are defined as $W_{k,i} = U(\bar{\frak{n}}[\hat{\nu}])\cdot v_{k,i}$, where $v_{k,i}$ are certain highest weight vectors.  With our definitions, we have that $W_{k, 0}$ is the principal subspace of the level $k$ standard $A_{2}^{(2)}$-module. The virtual subspaces are analogous to the principal-like subspaces in the untwisted case introduced in \cite{CalLM3}.   These subspaces were  implicitly used in the proof of all other twisted results, \cite{CalLM4}, \cite{CMP}, \cite{PS1}--\cite{PS2}, and \cite{PSW}, but the increase in the complexity of the current setting requires that these spaces to be worked with explicitly.

The virtual subspaces play a role similar to that of the principal subspaces of level $k$  non-vacuum modules for $A_2^{(1)}$, denoted by $W((k-i)\Lambda_0+i\Lambda_1)$ in \cite{CLM2} and \cite{CalLM2}, where $\Lambda_0, \Lambda_1$ are fundamental weights.  It was proved in \cite{CalLM2} that the principal subspaces $W((k-i)\Lambda_0+i\Lambda_1)$ have a presentation given by 
 an ideal generated by a single family of degree $k+1$ terms
 with an additional generator of $x_{\alpha}(-1)^{k-i+1}$, where $\alpha$ is the positive simple root. In this work we prove that our virtual subpaces  satisfy
 $$W_{k,i}\cong U(\overline{\mathfrak{n}}[{\hnu}])/I_{k,i}$$
 where the ideal $I_{k,i}$ is generated by $k+2$ different families of degree $k+1$ terms and a family of degree $k-i+1$ monomials constructed by using $\xa{1}{-\frac{1}{4}}$ and $\xb{-1}$, where $\alpha_1, \alpha_2$ are positive simple roots. These presentations of $W_{k,i}$ are given in Theorem \ref{presentation}, the main result of our paper.  The proof of this theorem adapts strategies from \cite{S1} and other results involving principal subspaces into a nested inductive argument. The outer induction descends through the virtual subspaces ending at the principal subspace $W_{k,0}$, while the inner induction homogenizes the extra terms in the ideals, $\xa{1}{-\frac{1}{4}}^m\xb{-1}^n$, by iteratively decreasing the power of $\xa{1}{-\frac{1}{4}}$, until the appropriate extra term is a power of $\xb{-1}$. In our proof, we make use of certain operators we call $\mathcal{Y}_i$, which play the role of the constant terms of intertwining operators used in \cite{CalLM1}-\cite{CalLM3}.
 
The paper is organized as follows. We recall the vertex operator constructions of $A_{2}^{(2)}$ and higher level standard modules for $A_{2}^{(2)}$ in sections 2 and 3. These results are standard and mostly taken from \cite{L1}, \cite{FLM1} (se also \cite{CalLM4}). Section 4 gives certain maps  that play an important role in proving the presentations of principal subspaces. In Section 5 we prove the main result of this paper, Theorem \ref{presentation} In the last section of the paper we construct short exact sequences of maps among virtual subspaces, and obtain, as a consequence, a set of recursions satisfied by their characters. Although this is not a complete system to allow us to solve for characters, we are able to conjecture that a specialized character of $W_{k,0}$ is given in terms of the inverse of the tadpole Cartan matrix of rank k. When $k$ is even, this character is related to Gollnitz-Gordon-Andrews identities.
 
{\bf Acknowledgements:} We thank the referee for providing us with constructive comments.

\section{Vertex operator construction of $A_2^{(2)}$}
In this section we recall from \cite{CalLM4} (which follows \cite{L1} and \cite{FLM1}, \cite{FLM2} and \cite{L2})  the vertex operator construction of $A_2^{(2)}$ using the lattice vertex algebra construction. 
Let  $\mathfrak{g}=sl(3, \mathbb{C})$  and $\h$ be a Cartan subalgebra of $\g$. Fix a root system $\Delta\subset\h^{*}$ and take $\{\a_1,\a_2\}$ to be a choice of simple roots. Identify $\h^{*}$ with $\h$ via a suitable symmetric, invariant bilinear form $\left<\cdot,\cdot\right>:\g\times \g\to \mathbb{C}$ such that 
\[
\begin{pmatrix}\left<\a_i,\a_j\right>\end{pmatrix}_{ij}=\begin{pmatrix}2&-1\\-1&2\end{pmatrix}.
\]
Now consider the root lattice of $\g$
\be
L=\mathbb{Z}\Delta=\mathbb{Z}\a_1\oplus\mathbb{Z}\a_2\subset \h,\ee
equipped with the bilinear form $\left<\cdot,\cdot\right>$. Take $\nu\in\text{Aut }L$ to be the isometry of $L$ given by the folding of the Dynkin diagram 

\begin{center}
  \begin{tikzpicture}[scale=.4]
  
 \draw [thick] (.35,0) -- (2,0);

  \draw [thick] (2.35,0) circle [radius=0.35];
      \draw [thick] (0,0) circle [radius=0.35];
    \node (a1) at (0,0) [below left ] {$\a_1$};
    \node (a1) at (2.35,0) [below right ] {$\a_2$};

  \end{tikzpicture}
\end{center}
in other words, we have 
\be
\nu(r\a_1+s\a_2)=r\a_2+s\a_1\ee
for any integers r and s.
Following the construction of twisted modules for lattice vertex algebras, we let $l$ be a positive integer such that $\nu^l=\text{id}_L$ and 
\be
\left<\nu^{l/2}\a,\a\right> \text{ for } \a\in L.\ee
In our setting this amounts to setting $l=4$, even though $\nu^2=\text{id}_L$. This doubling of the period was seen in \cite{CalLM4}, \cite{CMP}, and handled more generally in \cite{PSW}. Now that we have set the period of the isometry to be 4 we fix a primitive fourth root of unity, $\eta$, which we take $\eta=i$. 

We consider two central extensions of the $L$ by the cyclic group $\left<i\right>\cong\mathbb{Z}/4\mathbb{Z}$ which we denote by $\hat{L}$ and $\hat{L}_{\nu}$ with associated commutator maps 
\begin{align}
C_0:L\times L&\to \mathbb{C}^{\times}\\
(\alpha,\beta)&\mapsto (-1)^{\left<\alpha,\beta\right>}\end{align}
and 
\begin{align}
C:L\times L&\to \mathbb{C}^{\times}\\
(\alpha,\beta)&\mapsto -(-1)^{\left<\nu\alpha,\beta\right>},\end{align}
respectively. We also have normalized cocyles $\epsilon_{C_0}$ and $\epsilon_C$ and normalized sections ($\alpha\mapsto e_{\alpha}$) associated to these central extensions so that 
\be
e_{\alpha}e_{\beta}=\epsilon_{C_0}(\alpha,\beta)e_{\alpha+\beta} \text{ in } \hat{L}.\ee
For concreteness we can take 
\be 
\epsilon_{C_0}(m\alpha_1+n\alpha_2,r\alpha_1+s\alpha_2)=(-1)^{nr},\ee
where $m, n, r, s \in \Z$.

We also recall the affine Lie algebras
\begin{equation}
\hat{\h} = \h \otimes \C[t,t^{-1}] \oplus \C\bf{k}
\end{equation}
and
\begin{equation}
\hat{\h}[\nu] = \coprod_{n \in \frac{1}{k}\Z}\h_{(kn)} \otimes t^n \oplus \C\bf{k}
\end{equation}
with their usual brackets. We refer the reader to \cite{CalLM4} for further details regarding these constructions. 

Denote by $(V_L, Y)$ the vertex operator algebra associated with the root lattice $L$. We have, in particular,
\[
Y(\iota(e_{\alpha}), x)=E^{-} (-\alpha, x)E^{+}(-\alpha, x) e_{\alpha} x^{\alpha},
\]
where $\iota:\hat{L}\to V_L$ is the obvious inclusion.
Define the operators $x_{\alpha}(n)$ such that
\[
Y(\iota(e_{\alpha}), x)=\sum_{n \in \mathbb{Z}} x_{\alpha}(n) x^{-n-1}.
\]
Extend $\nu$ to an automorphism of $V_L$ and call it $\hat{\nu}$. Then $\hat{\nu} \in \mbox{ Aut} \; (V_L)$ and  $\hat{\nu}^4=1$. We have 
\be\hnu\iota(e_{\a_1})=i\iota(e_{\a_2}),\hnu\iota(e_{\a_2})=i\iota(e_{\a_1}), \text{ and } \hnu\iota(e_{\a_1+\a_2})=\iota(e_{\a_1+\a_2}).\ee

Denote by $(V_L^T, Y^{\hat{\nu}})$ the irreducible $\hat{\nu}$-twisted module for $V_L$ on which $\hat{\h}[\nu]$ has a natural action. We have 
\[
Y^{\hat{\nu}}(\iota(e_{\alpha}), x) =4^{-\frac{\langle \alpha, \alpha \rangle}{2}} \sigma(\alpha) E^{-}(-\alpha, x)E^{+}(-\alpha, x)e_{\alpha}x^{\alpha_{(0)}+\frac{\langle \alpha_{(0)}, \alpha_{(0)} \rangle}{2}-\frac{\langle \alpha, \alpha \rangle}{2}},
\]
where 
\be \label{normalizing_factor}
\sigma (\alpha)=(1+i)^{\langle \nu \alpha, \alpha \rangle}2^{\langle \alpha, \alpha \rangle /2}
\ee
is a normalizing factor. For $n \in (1/4) \mathbb{Z}$ define the operators $x_{\alpha}^{\hat{\nu}}(n)$ such that
\[
Y^{\hat{\nu}}(\iota(e_{\alpha}), x)=\sum_{n \in (1/4)\mathbb{Z}} x_{\alpha}^{\hat{\nu}}(n) x^{-n-\frac{\langle \alpha, \alpha \rangle}{2}}.
\]

Now we observe that the Lie algebra $\g$ may be realized as the vector space
\be
\g=\h\oplus\coprod_{\a\in\Delta}\mathbb{C}x_{\a}\ee
with $\h=\mathbb{C}\alpha_1\oplus\mathbb{C}\alpha_2$ and 
\[
[h,x_{\alpha}]=\left<h,\a\right>=-[x_{\a},h], [\h,\h]=0\]
\[[x_{\a},x_{\b}]=\begin{cases}\epsilon_{C_0}(\alpha,-\a)\a &\text{ if } \a+\beta=0\\
\epsilon_{C_0}(\a,\beta)x_{\a+\beta} &\text{ if } \left<\a,\beta\right>=-1\\
0 &\text{ if }\left<\a,\beta\right>\geq0\end{cases}\]
With our choice of cocycle the brackets of interest are
\[[x_{\a_j},x_{-\a_j}]=\a_j \text{ and } [x_{\a_1},x_{\a_2}]=x_{\a_1+\a_2}\]
for $j\in\{1,2\}$.

We now lift the isometry $\nu:L\to L$, which may also be viewed as an automorphism of the Lie subalgebra $\h\subset\g$ to an automorphism of $\g$ which we denote by $\hnu$. Explicitly we have
\be
\hnu x_{\a_1}=ix_{\a_2}, \hnu x_{\a_2}=ix_{\a_1}, \text{ and }\hnu x_{\a_1+\a_2}=x_{\a_1+\a_2}\ee
with similar formulas for elements associated to the negative roots. 

Define
\begin{equation}
\g_{(m)} = \{ x \in \g | \hnu x = i^m x \}
\end{equation}
for $m \in \mathbb{Z}$ and form 
\be
\g[\hnu]=\coprod_{n\in\frac{1}{4}\Z}\g_{(4n)}\otimes t^n\oplus \C\mathbf{c},\ee
the $\hnu$-twisted affine Lie algebra associated to $\g$ and $\hnu$ with
\be
[x\otimes t^m,y\otimes t^n]=[x,y]t^{m+n}+\left<x,y\right>m\delta_{m+n,0}\mathbf{c}\ee
and 
\be
[\mathbf{c},\g[\hnu]]=0\ee
for $m,n\in\frac{1}{4}\Z$, $x\in\g_{(4m)}$, and $y\in\g_{(4n)}$.  Adjoining the degree operator to $\g[\hnu]$ gives a copy of the twisted affine Lie algebra $A_2^{(2)}$ (cf. \cite{K}).

We recall the following result:
\begin{thm}[\cite{L1} Theorem 9.1 and \cite{FLM1} Theorem 3.1] The representation of $\h[\hnu]$ on $V_L^T$ extends uniquely to a Lie algebra representation of $\g[\hnu]$ on $V_L^T$ such that 
\[
\left(x_{\a}\right)_{(4n)}\otimes t^n\mapsto x_{\a}^{\hnu}(n)\]
for all $n\in\frac{1}{4}\Z$. Moreover, $V_L^T$ is an irreducible $\g[\hnu]$-module.\end{thm}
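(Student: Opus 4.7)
The plan is to construct the extended representation $\phi : \g[\hnu] \to \mathrm{End}(V_L^T)$ on generators, verify it is a Lie algebra homomorphism by a case analysis using the $\hnu$-twisted Jacobi identity for $Y^{\hnu}$, and finally deduce irreducibility from the known irreducibility of $V_L^T$ as a $\hnu$-twisted $V_L$-module.

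For the construction, I would define $\phi$ on the natural spanning set of $\g[\hnu]$ by the prescribed rule $(x_\alpha)_{(4n)} \otimes t^n \mapsto x_\alpha^{\hnu}(n)$ for $\alpha \in \Delta$ and $n \in \frac{1}{4}\Z$, combined with the given action of $\h[\hnu]$ on $V_L^T$ and $\mathbf{c} \mapsto \mathrm{id}$. Uniqueness follows at once, since the decomposition $\g = \h \oplus \coprod_{\alpha \in \Delta} \C x_\alpha$ forces these generators to span $\g[\hnu]$. Three families of bracket relations must then be verified. The Heisenberg--Heisenberg relations hold by hypothesis. The mixed Heisenberg--root relations $[h \otimes t^m, x_\alpha^{\hnu}(n)]$ follow from the standard commutator formula between a twisted Heisenberg mode and the twisted vertex operator $Y^{\hnu}(\iota(e_\alpha), x)$, which after extracting residues in $x$ produces $[h, x_\alpha] \otimes t^{m+n}$ with the correct $\hnu$-eigenvalue projection.

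The main obstacle is the root--root bracket $[x_\alpha^{\hnu}(m), x_\beta^{\hnu}(n)]$. I would derive these by extracting the appropriate residues from the $\hnu$-twisted Jacobi identity applied to $Y^{\hnu}(\iota(e_\alpha), x_1)$ and $Y^{\hnu}(\iota(e_\beta), x_2)$, as in \cite{L1}, \cite{FLM1}. Three subcases occur, according to whether $\langle\alpha,\beta\rangle \geq 0$ (giving a vanishing bracket), $\langle\alpha,\beta\rangle = -1$ (giving $\epsilon_{C_0}(\alpha,\beta)\, x_{\alpha+\beta}^{\hnu}(m+n)$), or $\alpha + \beta = 0$. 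The last case is the delicate one: both the Cartan term $\alpha(m+n)$ and the central term $m\delta_{m+n,0}\mathbf{c}$ must emerge with precisely the right coefficients. This is exactly where the normalizing factor $\sigma(\alpha)$ in \eqref{normalizing_factor}, the prefactor $4^{-\langle\alpha,\alpha\rangle/2}$, and the cocycle values $\epsilon_C(\alpha,-\alpha)$ from the twisted product $e_\alpha e_{-\alpha}$ must conspire so that $\mathbf{c}$ acts as the identity; the requisite bookkeeping is the standard twisted Frenkel--Kac computation carried out in \cite{L1}, \cite{FLM1}, and I would import it directly rather than redo it.

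For irreducibility, observe that $\phi(\g[\hnu])$ contains every mode of $Y^{\hnu}(\iota(e_\alpha), x)$ for all $\alpha \in \Delta$, together with every Heisenberg mode acting on $V_L^T$. Since $V_L$ is generated as a vertex operator algebra by $\{\iota(e_\alpha) : \alpha \in \Delta\}$, the twisted Jacobi identity implies that the modes of arbitrary elements of $V_L$ lie in the associative subalgebra generated by $\phi(\g[\hnu])$. Consequently every $\g[\hnu]$-invariant subspace of $V_L^T$ is automatically a $\hnu$-twisted $V_L$-submodule, and the irreducibility of $V_L^T$ as such a module (established in \cite{L1}, \cite{FLM1}) forces irreducibility as a $\g[\hnu]$-module.
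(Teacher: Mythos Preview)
The paper does not supply its own proof of this theorem; it is stated as a quotation of \cite{L1}, Theorem 9.1 and \cite{FLM1}, Theorem 3.1, and left unproved. Your sketch is a faithful outline of the argument in those references---construct the extension on generators, verify the bracket relations via the twisted Jacobi identity (with the $\alpha+\beta=0$ case carrying the nontrivial normalization bookkeeping), and deduce irreducibility from irreducibility as a twisted $V_L$-module---so there is nothing to compare beyond noting that your approach matches the cited sources rather than anything the present paper does.
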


The following structure results from \cite{CalLM4} will be useful in this paper:

\begin{lem} [\cite{CalLM4}, Lemma 3.2]\label{lemma1} As operators on $V_L^T$, we have
\[
x_{\a_2}^{\hnu}(m)=x_{\a_1}^{\hnu}(m) \; \; \; \mbox{if} \; \; \; m \in \frac{1}{4} +\mathbb{Z}
\]
and 
\[
x_{\a_2}^{\hnu}(m)=-x_{\a_1}^{\hnu}(m) \; \; \; \mbox{if} \; \; \; m \in \frac{3}{4} +\mathbb{Z}.
\]
\end{lem}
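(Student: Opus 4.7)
The plan is to deduce both identities directly from the Lie algebra representation theorem just recalled, which tells us that the operator $x_\alpha^{\hnu}(n)$ on $V_L^T$ coincides with the action of the element $(x_\alpha)_{(4n)}\otimes t^n\in\g[\hnu]$. With this in hand, both statements of the lemma will reduce to elementary eigenspace identities in $\g$ itself, which is the real content I need to extract.

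First, I would carry out the $\hnu$-eigenspace decomposition of $x_{\alpha_1},x_{\alpha_2}\in\g$. From $\hnu x_{\alpha_1}=ix_{\alpha_2}$ and $\hnu x_{\alpha_2}=ix_{\alpha_1}$ one sees that $\hnu^2=-\mathrm{id}$ on $\spn(x_{\alpha_1},x_{\alpha_2})$, so only the eigenspaces with indices $m=1$ and $m=3$ (eigenvalues $\pm i$) occur. Applying the standard projection
\[
P_m=\frac{1}{4}\sum_{j=0}^{3} i^{-jm}\hnu^j
\]
to $x_{\alpha_1}$ and to $x_{\alpha_2}$ yields the compact identities
\[
(x_{\alpha_1})_{(1)}=(x_{\alpha_2})_{(1)}=\frac{1}{2}(x_{\alpha_1}+x_{\alpha_2}), \qquad (x_{\alpha_1})_{(3)}=-(x_{\alpha_2})_{(3)}=\frac{1}{2}(x_{\alpha_1}-x_{\alpha_2}),
\]
together with $(x_{\alpha_i})_{(0)}=(x_{\alpha_i})_{(2)}=0$.

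Next I would transfer these relations into $\g[\hnu]$ and conclude. For $m\in\frac{1}{4}+\mathbb{Z}$ one has $4m\equiv 1\pmod 4$, so $(x_\alpha)_{(4m)}=(x_\alpha)_{(1)}$; hence $(x_{\alpha_1})_{(4m)}\otimes t^m=(x_{\alpha_2})_{(4m)}\otimes t^m$ inside $\g[\hnu]$. For $m\in\frac{3}{4}+\mathbb{Z}$ one has $4m\equiv 3\pmod 4$, giving instead $(x_{\alpha_2})_{(4m)}\otimes t^m=-(x_{\alpha_1})_{(4m)}\otimes t^m$. Applying the (linear) Lie algebra homomorphism $(x_\alpha)_{(4m)}\otimes t^m\mapsto x_\alpha^{\hnu}(m)$ provided by the previous theorem to both sides of these identities produces the two claimed operator equalities on $V_L^T$. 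The only real obstacle in this argument is bookkeeping the mod-$4$ residues of $4m$, which is routine; all substantive content is already absorbed into the twisted module construction and the representation theorem quoted above. A more pedestrian route via the vertex operator formula for $Y^{\hnu}(\iota(e_{\alpha_i}),x)$ would instead require checking that the scalar prefactor $4^{-\langle\alpha,\alpha\rangle/2}\sigma(\alpha)$ and the shift $x^{\alpha_{(0)}+\cdots}$ coincide for $\alpha=\alpha_1,\alpha_2$ and that the $E^{\pm}(-\alpha_i,x)$ differ only by inverting the half-integer part (since $(\alpha_2)_{(2)}=-(\alpha_1)_{(2)}$); this recovers the same signs but is longer.
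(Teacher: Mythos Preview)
Your argument is correct. Note that the present paper does not actually give a proof of this lemma: it is simply recalled from \cite{CalLM4}, Lemma~3.2, without argument. Your approach via the projection $P_m=\tfrac{1}{4}\sum_{j=0}^{3} i^{-jm}\hnu^j$ together with the representation theorem (Theorem~2.1 here) is the standard one and matches how such statements are proved in the cited literature; the eigenspace computation you sketch is exactly the intended content.
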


\begin{lem} [\cite{CalLM4}, Lemma 3.3]\label{lemma2} As operators on $V_L^T$, we have 
\be\label{commutator1} [x_{\a_1}^{\hnu}(m),x_{\a_1}^{\hnu}(n)]=-\frac{i}{4}(i^{-4m}-(-i)^{-4m})x_{\a_1+\a_2}^{\hnu}(m+n) \; \; \; \mbox{if} \; \; \; m,n\in\frac{1}{4}+\frac{1}{2}\mathbb{Z}
\ee
and 
\be\label{commutator2} [x_{\a_1+\a_2}^{\hnu}(m),x_{\a}^{\hnu}(n)]=0 \; \; \; \mbox{if} \; \; \; m\in\mathbb{Z}, n\in\frac{1}{4}\mathbb{Z} \; \; \; \mbox{and} \; \; \;  \a\in\{\a_1,\a_2,\a_1+\a_2\}. 
\ee
\end{lem}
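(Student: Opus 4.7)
The plan is to derive both commutator identities from the twisted Jacobi identity for the $\hnu$-twisted $V_L$-module $V_L^T$,
\begin{align*}
& x_0^{-1}\delta\left(\frac{x_1-x_2}{x_0}\right) Y^{\hnu}(u,x_1)Y^{\hnu}(v,x_2) - x_0^{-1}\delta\left(\frac{x_2-x_1}{-x_0}\right) Y^{\hnu}(v,x_2)Y^{\hnu}(u,x_1) \\
&\qquad = \frac{1}{4}\sum_{j=0}^{3} x_2^{-1}\delta\left(i^{-j}\frac{(x_1-x_0)^{1/4}}{x_2^{1/4}}\right) Y^{\hnu}\left(Y(\hnu^j u,x_0)v,x_2\right),
\end{align*}
by applying $\mathrm{Res}_{x_0}$ to both sides; the left-hand side then becomes the bracket $[Y^{\hnu}(u,x_1),Y^{\hnu}(v,x_2)]$, and each part of the lemma reduces to evaluating the corresponding residue on the right and extracting modes.

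For \eqref{commutator2}, take $u=\iota(e_{\a_1+\a_2})$ and $v=\iota(e_{\a})$ with $\a\in\{\a_1,\a_2,\a_1+\a_2\}$. Because $\hnu\iota(e_{\a_1+\a_2})=\iota(e_{\a_1+\a_2})$, every summand $Y(\hnu^j u,x_0)v$ is independent of $j$, and the identity $\sum_{j=0}^3 i^{-jN}$, which vanishes unless $4\mid N$ and equals $4$ otherwise, collapses the $j$-sum to $x_2^{-1}\delta((x_1-x_0)/x_2)\,Y^{\hnu}(Y(\iota(e_{\a_1+\a_2}),x_0)\iota(e_{\a}),x_2)$. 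Now $Y(\iota(e_{\a_1+\a_2}),x_0)\iota(e_{\a})=\epsilon_{C_0}(\a_1+\a_2,\a)\,x_0^{\langle \a_1+\a_2,\a\rangle}E^{-}(-\a_1-\a_2,x_0)\iota(e_{\a_1+\a_2+\a})$ lies in $x_0 V_L[[x_0]]$ because $\langle \a_1+\a_2,\a\rangle\geq 1$, while $x_2^{-1}\delta((x_1-x_0)/x_2)$ expands in non-negative powers of $x_0$. Hence $\mathrm{Res}_{x_0}$ of the product vanishes identically, and matching coefficients gives \eqref{commutator2}.

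For \eqref{commutator1}, take $u=v=\iota(e_{\a_1})$ and set $A(x_0)=Y(\iota(e_{\a_1}),x_0)\iota(e_{\a_1})\in x_0^2 V_L[[x_0]]$ and $B(x_0)=Y(\iota(e_{\a_2}),x_0)\iota(e_{\a_1})=-x_0^{-1}E^{-}(-\a_2,x_0)\iota(e_{\a_1+\a_2})$, using $\epsilon_{C_0}(\a_2,\a_1)=-1$. Since $\hnu^j\iota(e_{\a_1})=i^j\iota(e_{\nu^j\a_1})$, the weighted sum $\sum_{j=0}^3 i^{-jN}Y(\hnu^j u,x_0)v$ factors as $[1-(-1)^N][A(x_0)+i\cdot i^{-N}B(x_0)]$, vanishing for even $N$ and equaling $2A(x_0)+2i\cdot i^{-N}B(x_0)$ for odd $N$. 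The $A$-contribution drops out of $\mathrm{Res}_{x_0}$ by the same non-negative-power argument as in \eqref{commutator2}, while the $B$-contribution survives through its simple pole. Extracting the coefficient of $x_1^{-m-1}x_2^{-n-1}$ for $m,n\in\tfrac14+\tfrac12\mathbb{Z}$ and matching against the leading contribution $-x_0^{-1}\iota(e_{\a_1+\a_2})$ of $B(x_0)$ then produces the operator $x^{\hnu}_{\a_1+\a_2}(m+n)$ multiplied by the scalar $-\tfrac{i}{4}(i^{-4m}-(-i)^{-4m})$.

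The main obstacle will be the careful bookkeeping of the fractional delta-function expansions in \eqref{commutator1}: one must track how the phases $i^{-j}$ interact with the expansion of $(x_1-x_0)^{N/4}$ and match the resulting $1/4$-integer exponents with the prescribed mode indices $m,n\in\tfrac14+\tfrac12\mathbb{Z}$. One must also verify that the higher-order corrections coming from $E^{-}(-\a_2,x_0)$ shift only to modes $x^{\hnu}_{\a_1+\a_2}(m+n+k)$ with $k\geq 1$ whose $x_1,x_2$-coefficients decouple from those of the leading pole, so that the commutator depends on the single operator $x^{\hnu}_{\a_1+\a_2}(m+n)$ as asserted.
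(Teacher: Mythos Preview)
The paper does not itself prove this lemma; it is quoted verbatim as Lemma 3.3 from \cite{CalLM4}, so there is no in-paper proof to compare against. Your approach via the twisted Jacobi identity is the standard one and is essentially how such commutator formulas are obtained in \cite{CalLM4} and its predecessors \cite{L1}, \cite{FLM1}, \cite{FLM2}.

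Your outline is correct. One simplification: the concern in your final paragraph about the higher-order terms of $E^{-}(-\a_2,x_0)$ is unnecessary. Since $E^{-}(-\a_2,x_0)\in 1+x_0\,V_L[[x_0]]$, we have
\[
B(x_0)=-x_0^{-1}\iota(e_{\a_1+\a_2})+(\text{terms in }V_L[[x_0]]),
\]
and after multiplying by $(x_1-x_0)^{N/4}$, which by convention is expanded in non-negative powers of $x_0$, the operation $\mathrm{Res}_{x_0}$ kills every contribution except that of the simple pole, yielding exactly $-x_1^{N/4}\,Y^{\hnu}(\iota(e_{\a_1+\a_2}),x_2)$. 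There is no ``shifting to modes $x^{\hnu}_{\a_1+\a_2}(m+n+k)$ with $k\geq 1$'' to track; those terms simply vanish under the residue, and the commutator is precisely a scalar multiple of $x^{\hnu}_{\a_1+\a_2}(m+n)$. With this observation the bookkeeping you flag reduces to reading off the single surviving coefficient.
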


Consider the $\hat{\nu}$-stable Lie subalgebra of $\mathfrak{g}$: $\mathfrak{n}=\mathbb{C}x_{\alpha_1} \oplus \mathbb{C}x_{\alpha_2} \oplus \mathbb{C} x_{\alpha_1+\alpha_2}$, and its twisted affinization
\begin{equation} \label{n_affine}
\bar{\mathfrak{n}}[\hat{\nu}]=\coprod_{r \in (1/4)\mathbb{Z}} \mathfrak{n}_{(4r)} \otimes t^r.
\end{equation}

We recall the definition of the {\em principal subspace} of $V_L^T$ :
\[
W_L^T=U(\bar{\mathfrak{n}}[\hat{\nu}]) \cdot v_{\Lambda} \subset V_L^T.
\]
We shall denote $v_{\Lambda} \in V_L^T$ by 1: $v_{\Lambda}=1 \in V_L^T$.

We recall the following result, from which we will derive the necessary presentation of our principal subspace:
\begin{thm}[\cite{CalLM4}, Theorem 5.3] On the standard $\hat{\g}[\hnu]$-module $V_L^T$ we have:
\be\label{vertexoplevel1relation1}
\lim_{x_1^{1/4}\to x_2^{1/4}}(x_1^{1/2}+x_2^{1/2})Y^{\hnu}(\iota(e_{\a_j}),x_2)Y^{\hnu}(\iota(e_{\a_j}),x_1)=0 \text{ for } j=1,2,\ee
\be
\lim_{x_1^{1/4}\to ix_2^{1/4}}(x_2^{1/2}-x_1^{1/2})Y^{\hnu}(\iota(e_{\a_1}),x_2)Y^{\hnu}(\iota(e_{\a_2}),x_1)=0,\ee
\be\label{vertexoplevel1relation3}
Y^{\hnu}(\iota(e_{\a_1+\a_2}),x)^2=0,\ee
and
\be\label{vertexoplevel1relation4}
Y^{\hnu}(\iota(e_{\a_j}),x)Y^{\hnu}(\iota(e_{\a_1+\a_2}),x)=0 \text{ for } j=1,2.\ee
\end{thm}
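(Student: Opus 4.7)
My approach is to compute each product $Y^{\hnu}(\iota(e_\a), x_2) Y^{\hnu}(\iota(e_\b), x_1)$ directly from the explicit realization
\[
Y^{\hnu}(\iota(e_\a), x) = 4^{-\langle \a,\a\rangle/2}\,\sigma(\a)\,E^-(-\a,x)\,E^+(-\a,x)\,e_{\a}\,x^{\a_{(0)}+\frac{\langle\a_{(0)},\a_{(0)}\rangle}{2}-\frac{\langle \a,\a\rangle}{2}},
\]
by exchanging $E^+(-\a, x_2)$ past $E^-(-\b, x_1)$ using the standard exponential commutation identity. This produces a rational prefactor of the form
\[
\prod_{r=0}^{3}\bigl(1 - \eta^r (x_1/x_2)^{1/4}\bigr)^{\langle \nu^r \a, \b\rangle/4}
\]
multiplying a normally-ordered expression, exposing the pole and zero structure at each of the four loci $x_1^{1/4} = \eta^{-r} x_2^{1/4}$.

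For (1), with $\a = \b = \a_j$, the pairings $\langle \nu^r \a_j, \a_j\rangle = 2, -1, 2, -1$ produce zeros at $x_1^{1/4} = \pm x_2^{1/4}$ and poles at $x_1^{1/4} = \pm i x_2^{1/4}$. The factor $(x_1^{1/2}+x_2^{1/2}) = (x_1^{1/4}-ix_2^{1/4})(x_1^{1/4}+ix_2^{1/4})$ is chosen precisely to cancel the two poles, after which the limit $x_1^{1/4} \to x_2^{1/4}$ vanishes via the surviving $r=0$ zero factor. Relation (2) follows by the same scheme: for $\a = \a_1, \b = \a_2$, since $\nu\a_1 = \a_2$ the pairings become $-1, 2, -1, 2$, the roles of poles and zeros swap, and $(x_2^{1/2}-x_1^{1/2})$ together with the limit $x_1^{1/4} \to i x_2^{1/4}$ produces the analogous cancellation. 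For (3), where $\a_1+\a_2$ is $\nu$-fixed with norm $2$, all four pairings equal $2$ and the identity $\prod_{r=0}^{3}(1-\eta^r Z) = 1-Z^4$ collapses the prefactor to $(1-x_1/x_2)^{1/2}$; combined with the normalization constants, this reduces to an untwisted level-one vanishing, which holds on $V_L^T$ because the $\nu$-fixed sector is governed by the ordinary lattice VOA structure. Relation (4), via the uniform pairing $\langle \a_j, \a_1+\a_2\rangle = 1$, collapses the prefactor to $(1-x_1/x_2)^{1/4}$, which vanishes at $x_1=x_2$ once the equal-variable product is interpreted through the iterate.

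The main obstacle is the careful bookkeeping of the normalization $\sigma(\a)$, the cocycle factors $\epsilon_C$, and the zero-mode exponents $x^{\a_{(0)}+\cdots}$, since each identity asserts equality with zero and any residual nonzero scalar would ruin the conclusion. The doubled period $l=4$ introduces four singular loci rather than two, so tracking the fractional exponents in the prefactor demands more care than in the untwisted case. The mode-expansion verification on $V_L^T$ is assisted by Lemmas \ref{lemma1} and \ref{lemma2}, which consolidate the operators $x^{\hnu}_{\a_1}(m)$ and $x^{\hnu}_{\a_2}(m)$ into coherent expressions on the twisted module.
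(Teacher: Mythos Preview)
The paper does not prove this theorem: it is quoted verbatim as Theorem~5.3 of \cite{CalLM4} and used as input for the higher-level analysis. There is therefore no in-paper proof to compare against; the only relevant comparison is with the argument in \cite{CalLM4} (which in turn follows \cite{L1} and \cite{FLM1}).

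Your approach is the standard one and matches what \cite{CalLM4} does: write each twisted vertex operator in the form $E^-E^+e_\a x^{\cdots}$, commute $E^+(-\a,x_2)$ past $E^-(-\b,x_1)$ to extract a rational prefactor encoding the pole and zero structure at the four loci $x_1^{1/4}=\eta^{-r}x_2^{1/4}$, and then read off each relation from that prefactor together with the normally-ordered remainder. The qualitative pole/zero analysis you give for (\ref{vertexoplevel1relation1}) and the second relation is correct.

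Two technical points to tighten. First, the exponent in your prefactor should be $\langle\nu^r\a,\b\rangle$, not $\langle\nu^r\a,\b\rangle/4$: the twisted $E^+E^-$ identity (see \cite{L1}, \cite{FLM1}) gives integer exponents here, and with the correct exponents your products in (\ref{vertexoplevel1relation3}) and (\ref{vertexoplevel1relation4}) become $(1-x_1/x_2)^2$ and $(1-x_1/x_2)$ rather than the fractional powers you wrote; this also removes the spurious branch cuts. Second, for (\ref{vertexoplevel1relation3}) and (\ref{vertexoplevel1relation4}) the vanishing at $x_1=x_2$ does not follow merely from the prefactor having a zero there: one must also control the zero-mode contribution $e_\a x^{\a_{(0)}+\cdots}$ acting on $e_\b$, which can introduce negative powers. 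In \cite{CalLM4} this is handled by computing the full normally-ordered product (including the cocycle and $\sigma$ factors) and checking that the total order of vanishing is nonnegative; your closing paragraph correctly flags this as the delicate step, but the argument as written does not carry it out.
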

Certain truncations of the coefficients of the products of vertex operators in (\ref{vertexoplevel1relation1}), (\ref{vertexoplevel1relation3}) and (\ref{vertexoplevel1relation4}), together with the highest weight vector relations, are all the relations of a presentation of the principal subspace $W_L^T$ (cf. Theorem 7.1 in \cite{CalLM4}).

Recall (5.18) from \cite{CalLM4} with $j=1$ and $t=1/2$:
\[
R^0_{1, \frac{1}{2}} =x_{\a_1}^{\hnu} \left ( -\frac{3}{4} \right ) x_{\a_1}^{\hnu} \left ( -\frac{1}{4} \right ) + x_{\a_1}^{\hnu} \left ( -\frac{1}{4} \right ) x_{\a_1}^{\hnu} \left ( -\frac{3}{4} \right ) + 2x_{\a_1}^{\hnu} \left ( -\frac{1}{4} \right )^2,
\]
which implies 
\be \label{relation_A22}
\left (x_{\a_1}^{\hnu} \left ( -\frac{3}{4} \right )x_{\a_1}^{\hnu} \left ( -\frac{1}{4} \right ) + x_{\a_1}^{\hnu} \left ( -\frac{1}{4} \right ) x_{\a_1}^{\hnu} \left ( -\frac{3}{4} \right ) \right  ) \cdot 1=0.
\ee
 As a consequence of Lemmas \ref{lemma1} and \ref{lemma2}, and (\ref{relation_A22}) we note that
\be \label{commutator_dot_1}
x_{\a_1+\a_2}^{\hnu}(-1) \cdot 1= c x_{\a_1}^{\hnu} \left ( -\frac{3}{4} \right ) x_{\a_1}^{\hnu} \left ( -\frac{1}{4} \right ) \cdot 1,
\ee
where $c$ is a nonzero constant.

Finally, as in \cite{CalLM4}, \cite{CMP}, etc. we have a tensor product grading on $V_L^T$ given by the eigenvalues of $L^{\hnu}(0)$, where \begin{equation*} Y^{\hnu}(\omega,z)=\sum_{m\in\mathbb{Z}}L^{\hnu}(m)z^{-m -2},\end{equation*}
 which we call the {\em weight grading}.  We also have a grading by {\em charge}, given by the eigenvalues of $\gamma = (\alpha_1 + \alpha_2)_{(0)}$. We note that these gradings are compatible, and refer the reader to \cite{CalLM4} for more details.

\section{Vertex operator construction of higher level standard modules for $A_2^{(2)}$} 

Let $k$ be  a nonnegative integer. We will use $k$ to denote the level of representations in this paper. Consider the vector space
\[
V_L^{\otimes k}= V_L \otimes \cdots \otimes V_L
\]
and
the vertex operator 
\[
Y^{\otimes k} (v_1 \otimes \cdots \otimes v_k, x)=Y(v_1, x) \otimes \cdots \otimes Y(v_k, x).
\]

Define the vectors
\be 
v_{k,0} = 1 \otimes \dots \otimes 1,
\ee
and, more generally, for $0 \leq i \leq k$,
\be \label{hw_vectors}
v_{k,i} = 1^{\otimes (k-i)} \otimes e_{\alpha_1}^{\otimes i} \ (= \underbrace{1 \otimes \cdots \otimes 1}_{k-i \; \; \mbox{times}} \otimes \underbrace{e_{\alpha_1} \otimes \cdots \otimes e_{\alpha_1}}_{i \; \mbox{times}}).
\ee

It is known that $(V_L^{\otimes k}, Y^{\otimes k})$ is a vertex operator algebra (cf. \cite{LL}). Define the operators $x_{\alpha}(m)$ on $V_L^{\otimes k}$ such that 
\[
Y^{\otimes k}(x_{\alpha}(-1) \cdot v_{k,0}, x)=\sum_{m \in \mathbb{Z}}x_{\alpha}(m)x^{-m-1}.
\]

Denote by $(V_L^T)^{\otimes k}$ the tensor product of k copies of $V_L^T$:
\[
(V_L^T)^{\otimes k}=V_L^T \otimes \cdots \otimes V_L^T.
\]
Consider the automorphism $\hat{\nu} \otimes \cdots \otimes \hat{\nu}$ of $V_L^{\otimes k}$, and denoted it by $\hat{\nu}$. Then $\hat{\nu}^4=1$.  Let $Y^{\hat{\nu}, \otimes k}$ be the vertex operator
\[
Y^{\hat{\nu}, \otimes k}=Y^{\hat{\nu}} \otimes \cdots \otimes Y^{\hat{\nu}}.
\]
Since $V_L^T$ is an irreducible $\hat{\nu}$-twisted $V_L$-module, then $((V_L^T)^{\otimes k}, Y^{\hat{\nu}, \otimes k})$ is an irreducible $\hat{\nu}$-twisted module for $V_L^{\otimes k}$. Define the operators $x_{\alpha}^{\hat{\nu}}(m)$ on $(V_L^T)^{\otimes k}$ as follows:
\[
Y^{\hat{\nu}, \otimes k}(x_{\alpha}(-1) \cdot v_{k, 0}, x)= \sum_{m \in \frac{1}{4} \mathbb{Z}}x_{\alpha}^{\hat{\nu}}(m)x^{-m-1}.
\]

By \cite{Li} we know that $L^{\hnu}(k\Lambda_0) :=U(\hat{\mathfrak{g}}[\hat{\nu}]) \cdot v_{k,0} \subset (V_L^T)^{\otimes k}$ is a level k standard $\hat{\mathfrak{g}}[\hat{\nu}]$-module with the tensor product action (diagonal action)
\begin{equation} \label{diagonal_action}
x_{\alpha}^{\hat{\nu}}(n) \cdot v_{k, 0}=x_{\alpha}^{\hat{\nu}}(n) \cdot 1 \otimes 1 \cdots \otimes 1 + \cdots + 1 \otimes \cdots 1 \otimes x_{\alpha}^{\hat{\nu}}(n) \cdot 1.
\end{equation}
We also note here that our gradings on $V_L^T$ by weight and charge naturally extend to $(V_L^T)^{\otimes k}$ (cf. \cite{BS} for more details).

Define the vector spaces
\begin{equation} \label{virtual_spaces}
W_{k,i} = U(\bar{\frak{n}}[\hat{\nu}])\cdot v_{k,i}
\end{equation}
for any $0 \leq i \leq k$ (recall (\ref{n_affine}) and (\ref{hw_vectors})), which we call {\it virtual subspaces}. Note that 
\be
W_{k,0}=U(\bar{\mathfrak{n}}[\hat{\nu}]) \cdot v_{k,0}
\ee
 is the principal subspace of the level k standard module $L^{\hnu}(k\Lambda_0)$. These virtual subspaces are analogous to the principal-like subspaces introduced in \cite{CalLM3}.

\begin{thm}\label{RelThm} On the $\hat{\mathfrak{g}}[\hat{\nu}]$-module $L^{\hnu}(k\Lambda_0)$ we have 
\be \label{vertexoprel1}\begin{aligned} 
\left ( \prod_{1 \leq i<j \leq k+1-r} \lim_{x_i^{1/4} \to x_j^{1/4}} (x_i^{1/2}\right.&\left.+x_j^{1/2}) \right)\\& Y^{\hat{\nu},\otimes k}(\iota(e_{\alpha_l}), x_1) \cdots  Y^{\hat{\nu},\otimes k}(\iota(e_{\alpha_l}), x_{k+1-r} ) \left(Y^{\hat{\nu},\otimes k}(\iota(e_{\alpha_1+\alpha_2}), x)\right)^r=0 \end{aligned}
\ee
for $0 \leq r \leq k-1$ and 
\be \label{vertexoprel3} 
Y^{\hat{\nu},\otimes k}(\iota(e_{\alpha_l}), x)Y^{\hat{\nu},\otimes k}(\iota(e_{\alpha_1+\alpha_2}), x )^k =0
\ee
for $l=1,2$. We also have 
\be\label{vertexoprel2}
Y^{\hnu, \otimes k}(\iota(e_{\a_1+\a_2}),x)^{k+1}=0.\ee
\end{thm}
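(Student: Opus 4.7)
The plan is to deduce all three relations of the theorem from the corresponding level-one relations (\ref{vertexoplevel1relation1})--(\ref{vertexoplevel1relation4}), exploiting the tensor product structure of $(V_L^T)^{\otimes k}$ together with properties of the submodule $L^{\hnu}(k\Lambda_0)$. For each root $\alpha$, decompose
\[
Y^{\hnu,\otimes k}(\iota(e_\alpha),x) = \sum_{j=1}^k O_j^\alpha(x),
\]
where $O_j^\alpha(x)$ denotes the operator acting as $Y^{\hnu}(\iota(e_\alpha),x)$ on the $j$-th tensor factor and as the identity on the remaining factors. Operators supported on distinct slots commute trivially, and within a single slot the level-one relations apply; in addition, operators for $\alpha_1+\alpha_2$ commute with those for $\alpha_1$, $\alpha_2$, and $\alpha_1+\alpha_2$ by Lemma \ref{lemma2}.

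Relation (\ref{vertexoprel2}) is the easiest: (\ref{vertexoplevel1relation3}) gives $O_j^{\alpha_1+\alpha_2}(x)^2=0$, so expanding $\bigl(\sum_j O_j^{\alpha_1+\alpha_2}(x)\bigr)^{k+1}$ by the multinomial theorem forces each monomial to repeat some slot and hence vanish by pigeonhole. Relation (\ref{vertexoprel3}) is handled similarly: the $k$-th power collapses to $k!\prod_{j=1}^k O_j^{\alpha_1+\alpha_2}(x)$, and multiplying on the left by $\sum_{j} O_j^{\alpha_l}(x)$ produces in each summand the same-variable single-slot product $Y^{\hnu}(\iota(e_{\alpha_l}),x)Y^{\hnu}(\iota(e_{\alpha_1+\alpha_2}),x)$, which vanishes by (\ref{vertexoplevel1relation4}).

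For (\ref{vertexoprel1}), the heart of the theorem, set $n=k+1-r$ and expand both the product $\prod_{i=1}^n \sum_j O_j^{\alpha_l}(x_i)$ and the $r$-th power $\bigl(\sum_j O_j^{\alpha_1+\alpha_2}(x)\bigr)^r = r!\sum_{|R|=r}\prod_{s\in R} O_s^{\alpha_1+\alpha_2}(x)$, where the collapse of the latter uses $O_s^{\alpha_1+\alpha_2}(x)^2=0$. Each summand in the resulting expansion is indexed by a function $f\colon\{1,\ldots,n\}\to\{1,\ldots,k\}$ together with a subset $R\subset\{1,\ldots,k\}$ of size $r$. Since $n+r = k+1 > k$, pigeonhole forces one of the following configurations for every $(f,R)$: either (i) some slot outside $R$ receives at least two $\alpha_l$-operators, with variables $x_{i_1}$ and $x_{i_2}$, or (ii) some slot lies in both $R$ and the image of $f$, so that an $\alpha_l$- and a $\beta$-operator coexist in that slot. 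In case (i) the accompanying factor $(x_{i_1}^{1/2}+x_{i_2}^{1/2})$ and the limit $x_{i_1}^{1/4}\to x_{i_2}^{1/4}$ from the prefactor annihilate the term via (symmetrizing $x_{i_1}\leftrightarrow x_{i_2}$ in) (\ref{vertexoplevel1relation1}). In case (ii) the $\beta$-operator can be freely commuted past all $\alpha_l$-operators by Lemma \ref{lemma2}, after which one wishes to apply (\ref{vertexoplevel1relation4}) to conclude.

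The main obstacle is precisely case (ii): (\ref{vertexoplevel1relation4}) is a coincident-variable vanishing, whereas in our expansion the $\alpha_l$-operator carries the formal variable $x_i$ and the $\beta$-operator carries the distinct variable $x$. Bridging this gap is the delicate technical point, requiring one to combine the interplay between the prefactor of limits (that set the $x_i$'s to a common value while leaving $x$ untouched) with the specific structure of $L^{\hnu}(k\Lambda_0)$ as the cyclic submodule generated by $v_{k,0}$ under the diagonal $\hat{\mathfrak g}[\hat\nu]$-action, so that the residual case-(ii) contributions assemble into an expression which, though nonzero on the full tensor product, vanishes upon restriction to the standard module.
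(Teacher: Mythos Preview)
Your overall strategy --- decompose $Y^{\hnu,\otimes k}(\iota(e_\alpha),x)$ as $\sum_j O_j^\alpha(x)$, expand, and kill each summand by pigeonhole together with the level-one relations (\ref{vertexoplevel1relation1}), (\ref{vertexoplevel1relation3}), (\ref{vertexoplevel1relation4}) --- is exactly what the paper's (very terse) proof does. Your treatment of (\ref{vertexoprel2}) and (\ref{vertexoprel3}) is correct and complete, and for (\ref{vertexoprel1}) your case~(i) is fine.

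The problem is your case~(ii). You read the variable $x$ carried by the $\alpha_1+\alpha_2$-operators as genuinely distinct from the common limit of the $x_i$'s, and then try to close the resulting gap by restricting to $L^{\hnu}(k\Lambda_0)$. That rescue does not work. Take $k=2$, $r=1$: after the limit and after discarding the case-(i) terms one is left, up to a nonzero scalar, with
\[
Y^{\hnu}(\iota(e_{\alpha_l}),x_2)\,Y^{\hnu}(\iota(e_{\alpha_1+\alpha_2}),x)\cdot 1 \;\otimes\; Y^{\hnu}(\iota(e_{\alpha_l}),x_2)\cdot 1 \;+\;(\text{swap}),
\]
applied to $v_{2,0}=1\otimes 1\in L^{\hnu}(2\Lambda_0)$. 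Since the $\alpha_l$- and $\alpha_1+\alpha_2$-operators commute (Lemma~\ref{lemma2}), the first tensor factor equals $Y^{\hnu}(\iota(e_{\alpha_1+\alpha_2}),x)\,Y^{\hnu}(\iota(e_{\alpha_l}),x_2)\cdot 1$, which is nonzero for generic $x\neq x_2$. So the expression is nonzero on the highest weight vector itself; restricting to the standard module cannot kill it. More generally, the level-one identities already hold on all of $V_L^T$, so the level-$k$ identities obtained from them by the tensor/pigeonhole argument hold on all of $(V_L^T)^{\otimes k}$; nothing special about the cyclic submodule can be used to cancel residual terms.

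The resolution is simpler and purely notational: in (\ref{vertexoprel1}) the variable $x$ is to be taken as the common limiting value of the $x_i$'s, exactly as in the $r=k$ case (\ref{vertexoprel3}) and as is required in order to extract from (\ref{vertexoprel1}) the one-parameter families $R(\alpha_1,\alpha_1+\alpha_2,r\,|\,t)$ in (\ref{R-relations}), which are graded by a single parameter $t$. Under that reading, in your case~(ii) the single-slot product $Y^{\hnu}(\iota(e_{\alpha_l}),x)\,Y^{\hnu}(\iota(e_{\alpha_1+\alpha_2}),x)$ arises with coincident variable and vanishes immediately by (\ref{vertexoplevel1relation4}). With this correction your argument is complete and coincides with the paper's proof.
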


\begin{proof}
Recall that
\begin{equation} \begin{aligned} \nonumber
& Y^{\hat{\nu}, \otimes k} \left ( \iota(e_{\alpha}),  x \right)  \\   & = Y^{\hat{\nu}}(\iota(e_{\alpha}), x) \otimes 1_V \otimes \cdots \otimes 1_V+ \cdots + 1_V \otimes \cdots \otimes Y^{\hat{\nu}}(\iota(e_{\alpha}), x),
\end{aligned}
\end{equation}
where $\alpha$ is any root and $1_V$ is the identity operator. Now the statement follows from the corresponding statement for the level 1 case (Theorem 5.3 in \cite{CalLM4}).
\end{proof}


Following \cite{CalLM1}--\cite{CalLM4}, \cite{CMP}, etc. and using Theorem \ref{RelThm}, we introduce the following formal infinite sums indexed by $t \in \frac{1}{4} \mathbb{Z}$:
\be  \label{R-relations}  \begin{aligned} 
 & R(\alpha_1, \alpha_1+\alpha_2, r | t) &
\\ & = \sum_{\substack{ i_1, \dots, i_{k+1-r}  \in \{ 0, \dots \frac{k-r}{2} \} \\ i_1+ \cdots i_{k+1-r} = \frac{(k-r)(k-r+1)}{4} \\  m_1, \dots m_{k+1-r} \in \frac{1}{4} +\frac{1}{2} \mathbb{Z}, m_{k+2-r}, \dots , m_{k+1} \in \mathbb{Z} \\ m_1+\cdots + m_{k+1}+  \frac{(k-r)(k-r+1)}{4}=-t }} \left (  \prod_{j=1}^{k+1-r} x_{\alpha_1}^{\hat{\nu}}(m_j+i_j) \right ) x_{\alpha_1+\alpha_2}^{\hat{\nu}}(m_{k+2-r}) \cdots x_{\alpha_1+\alpha_2}^{\hat{\nu}}(m_{k+1}),&
\end{aligned}
\ee

\be
R(\alpha_1, \alpha_1+\alpha_2, k | t)=  \sum_{\substack{m_1 \in \frac{1}{4} +\frac{1}{2} \mathbb{Z}, m_2, \dots, m_{k+1} \in \mathbb{Z} \\ m_1+ \cdots +m_{k+1}=-t}} x_{\alpha_1}^{\hat{\nu}}(m_1) x_{\alpha_1+\alpha_2}^{\hat{\nu}} (m_2) \cdots x_{\alpha_1+\alpha_2}^{\hat{\nu}}(m_{k+1})
\ee
and
\be
R(\alpha_1+\alpha_2 | t)= \sum_{\substack{m_1, \dots, m_{k+1} \in \mathbb{Z} \\m_1+ \cdots +m_{k+1}=-t} } x_{\alpha_1+\alpha_2}^{\hat{\nu}}(m_1) \cdots x_{\alpha_1+\alpha_2}^{\hat{\nu}}(m_{k+1}).
\ee

As in \cite{CalLM1}--\cite{CalLM4}, \cite{CMP}, \cite{PS1}--\cite{PS2}, etc. we may write 
\be \label{R-0-relations}
R(\a_1,\a_1+\a_2,r|t)=R^0(\a_1,\a_1+\a_2,r|t) + a \ee
where $a\in \widetilde{U(\overline{\n}[\hnu])\overline{\n}[\hnu]_{+}}$ and $R^0(\a_1, \a_1+\a_2, r|t)$ is the finite sum associated to $R(\a_1,\a_1+\a_2,r|t)$, with $0 \leq r \leq k$,  $m_1, \dots, m_{k+1-r} \in (\frac{1}{4} +\frac{1}{2} \mathbb{Z})_{<0}$, $m_{k+2-r}, \dots , m_{k+1} \in \mathbb{Z}_{<0}$, and all the other conditions on indices remain the same, and $\widetilde{U(\overline{\n}[\hnu])\overline{\n}[\hnu]_{+}}$ is an appropriate completion of $U(\overline{\n}[\hnu])\overline{\n}[\hnu]_{+}$ (see the Appendix in \cite{S2} for a formal construction of this completion).  Similarly, for any $t \in \Z$ we have 
\be
R(\a_1+\a_2|t)=R^0(\a_1+\a_2|t)+a,\ee
where $a\in U(\overline{\n}[\hnu])\overline{\n}[\hnu]_{+}$ and $R^0(\a_1+\a_2|t)$ is the finite sum indexed by the integers $m_1, \dots, m_{k+1} <0$.

We now note importantly that our relations may be rewritten in a simpler form:
\begin{lem}\label{relationreorder}
We have 
\begin{equation} \label{R-ordered}
\begin{aligned}R^0&(\a_1,\a_1+\a_2,r|t)\\&=\sum_{n=0}^{\lfloor \frac{k+1-r}{2}\rfloor}\sum_{\substack{\mathbf{p}\in (\frac{1}{4}Z)_{<0}^{k+1-n} \\ p_1+\cdots +p_{k+1-n}=-t\\ p_1\leq \cdots \leq p_{k+1-r-2n} \leq -1/4\\ p_{k+2-r-2n} \le \cdots \le p_{k+1-n} \leq -1}}a_{\mathbf{p}}x_{\a_1}^{\hat{\nu}}(p_1)\cdots x_{\a_1}^{\hat{\nu}}(p_{k+1-r-2n})x_{\a_1+\a_2}^{\hat{\nu}}(p_{k+2-r-2n})\cdots x_{\a_1+\a_2}^{\hat{\nu}}(p_{k+1-n})\end{aligned}\end{equation}

where the $a_{\mathbf{p}}\in \mathbb{C}$ are constants related to reordering of the terms .
\end{lem}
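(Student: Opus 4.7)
The plan is to derive (\ref{R-ordered}) from the defining expansion of $R^0(\a_1,\a_1+\a_2,r|t)$ by normal-ordering each monomial using the commutation relations of Lemma \ref{lemma2}. Each monomial in the defining sum has the shape
\[
x_{\a_1}^{\hnu}(q_1)\cdots x_{\a_1}^{\hnu}(q_{k+1-r})\, x_{\a_1+\a_2}^{\hnu}(q_{k+2-r})\cdots x_{\a_1+\a_2}^{\hnu}(q_{k+1}),
\]
and I want to express each such monomial as a $\mathbb{C}$-linear combination of normal-ordered monomials of the form appearing on the right-hand side of (\ref{R-ordered}).

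The first step is to exploit equation (\ref{commutator2}), which says that every $x_{\a_1+\a_2}^{\hnu}$ operator commutes with every $x_{\a_1}^{\hnu}$ and every other $x_{\a_1+\a_2}^{\hnu}$ operator. Consequently, at any stage of the procedure, the trailing block of $x_{\a_1+\a_2}^{\hnu}$-factors can be sorted into non-decreasing order of arguments at no cost, and any new $x_{\a_1+\a_2}^{\hnu}$-factor arising from a commutator later in the procedure can be migrated freely to join that block.

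The second step is to iteratively reorder the leading $x_{\a_1}^{\hnu}$-block. By (\ref{commutator1}), each adjacent transposition has the form
\[
x_{\a_1}^{\hnu}(p)\,x_{\a_1}^{\hnu}(q) = x_{\a_1}^{\hnu}(q)\,x_{\a_1}^{\hnu}(p) + c(p,q)\, x_{\a_1+\a_2}^{\hnu}(p+q)
\]
for an explicit scalar $c(p,q) \in \mathbb{C}$. Applying these transpositions until the $\a_1$-arguments are in non-decreasing order, and recursing on each correction monomial generated, expresses each original monomial as a finite sum indexed by the number $n \in \{0, \ldots, \lfloor(k+1-r)/2\rfloor\}$ of commutator absorptions performed. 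A term with $n$ absorptions has exactly $k+1-r-2n$ factors of $x_{\a_1}^{\hnu}$ and $r+n$ factors of $x_{\a_1+\a_2}^{\hnu}$, matching the summand structure of (\ref{R-ordered}).

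The total sum of arguments is preserved through every transposition (which leaves $p+q$ unchanged) and every commutator substitution (which replaces the pair $p,q$ by the single argument $p+q$), so the constraint $\sum p_i = -t$ carries through unchanged. Gathering all combinatorial contributions to each fixed ordered tuple $\mathbf{p}$ into a single constant $a_{\mathbf{p}} \in \mathbb{C}$ yields (\ref{R-ordered}). The main challenge is bookkeeping rather than conceptual: one must track the multisets of original arguments contributing to each normal-ordered $\mathbf{p}$, verify that the $p+q$'s generated by commutators respect the upper bounds $\leq -1$ on $\a_1+\a_2$-arguments (which follows from $p,q$ being negative elements of $\tfrac{1}{4}+\tfrac{1}{2}\mathbb{Z}$ together with the argument-parity structure of the commutator in Lemma \ref{lemma2}), and confirm that no positive-index operators are reintroduced, so that the result remains in the support of $R^0$.
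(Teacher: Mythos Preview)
Your proposal is correct and follows essentially the same approach as the paper's proof: both arguments take an arbitrary monomial in $R^0(\a_1,\a_1+\a_2,r|t)$ and reorder the $x_{\a_1}^{\hnu}$-block via the commutator (\ref{commutator1}), absorbing the resulting $x_{\a_1+\a_2}^{\hnu}$-terms into the trailing block, which is sorted freely using (\ref{commutator2}). The only cosmetic difference is that the paper phrases the sorting as ``move the smallest $m_j+i_j$ to the left and repeat,'' whereas you use adjacent transpositions; both yield the same normal-ordered expansion and the same bound $n\le\lfloor(k+1-r)/2\rfloor$ on the number of absorptions.
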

\begin{proof}
Consider any monomial in $R^0(\a_1,\a_1+\a_2,r|t)$
\begin{equation} \nonumber
x_{\a_1}^{\hat{\nu}}(m_1+i_1)x_{\a_1}^{\hat{\nu}}(m_2+i_2)\cdots x_{\a_1}^{\hat{\nu}}(m_{k+1-r}+i_{k+1-r})x_{\a_1+\a_2}^{\hat{\nu}}(m_{k+2-r})\cdots x_{\a_1+\a_2}^{\hat{\nu}}(m_{k+1})
\end{equation}
(recall (\ref{R-relations}) and (\ref{R-0-relations})).
Choosing the term with the smallest $m_j+i_j$ for $j=1,\dots,k+1-r$, we move this term to the left of our monomial using the commutation relations (\ref{commutator1}). Namely, we have that:
\begin{equation} \nonumber
x_{\a_1}^{\hat{\nu}}(m_\ell + i_\ell) x_{\a_1}^{\hat{\nu}}(m_j + i_j ) = x_{\a_1}^{\hat{\nu}}(m_j + i_j )x_{\a_1}^{\hat{\nu}}(m_\ell + i_\ell) + c x_{\a_1+\a_2}^{\hat{\nu}}(m_\ell + m_j + i_\ell + i_j)
\end{equation}
for some constant $c \in \mathbb{C}$, which is $0$ in the case that $m_\ell + m_j + i_\ell + i_j \notin \mathbb{Z}$. At each step, this creates a new monomial with either the same number of $x_{\a_1}^{\hat{\nu}}(\cdot)$ terms, or introduces a $x_{\a_1+ \a_2}^{\hat{\nu}}(\cdot)$ term at the expense of two $x_{\a_1}^{\hat{\nu}}(\cdot)$ terms. One repeats the above process by moving the term $x_{\a_1}^{\hat{\nu}}(p)$ with the smallest $p$, to the left of its corresponding monomial. After doing this we will have created monomials which have at most $\lfloor{\frac{k+1-r}{2}}\rfloor$ $x_{\a_1+\a_2}^{\hat{\nu}}(q)$ terms in the monomials. We also use (\ref{commutator2}) to order the terms $x_{\a_1+\a_2}^{\hat{\nu}}(q)$. Now (\ref{R-ordered}) follows.

\end{proof}

\begin{rem}
Of importance in the proof of our upcoming main result, Theorem \ref{presentation}, will be the variance of the entries in the summands of (\ref{R-ordered}). The unique longest term of (\ref{R-ordered}) with the smallest such variance will be called the most ``balanced'' term of the expression. 

\end{rem}

Set
\be
J=\sum_{r=0}^{k}\left(\sum_{t\geq\frac{k+1+3r}{4}}U(\overline{\n}[\hnu])R^{0}(\a_1,\a_1+\a_2,r|t)\right) + \sum_{t \geq k+1}U(\overline{\n}[\hnu])R^{0}(\a_1+\a_2|t).
\ee
Define the ideals
\be
I_{k,0}=J+U(\overline{\n}[\hnu])\overline{\n}[\hnu]_{+}\ee
and
\be
I_{k,i} = I_{k,0} + \sum_{\ell = 0}^{k+1-i} U(\bar{\frak{n}}[\hat{\nu}])x_{\alpha_1}^{\hat{\nu}}\left(-\frac{1}{4}\right)^\ell x_{\alpha_1 + \alpha_2}^{\hat{\nu}}\left(-1\right)^{k+1-i-\ell}
\ee
for $0 \leq i \leq k$.

Recall the virtual subspaces (\ref{virtual_spaces}). There are natural surjective maps
\be
f_{k,i}: U(\bar{\frak{n}}[\hat{\nu}]) \longrightarrow W_{k,i}, \; \; \; a \mapsto a \cdot v_{k,i}.
\ee
The kernel of these maps will be called the {\em presentation} of $W_{k,i}$. Our main goal in this work is to prove that the presentations Ker $f_{k,i}$ are equal to the ideals $I_{k,i}$ above, a result  analogous to results found in \cite{CalLM1}--\cite{CalLM4} and related works.

\section{Shifting Maps}

Continuing to follow \cite{CalLM4}, \cite{CMP}, etc. we define certain maps acting on $U(\overline{\n}[\hnu])$ and $(V_L^T)^{\otimes k}$, which are needed in the proof of the presentation of the virtual subspaces $W_{k,i}$ for $0 \leq i \leq k$.

Let 
\[ \gamma=\frac{1}{2}(\a_1+\a_2) (=\alpha_{1_{(0)}}) \in\h_{(0)},\]
and 
\[ \theta:L\to \mathbb{C}^{\times},\]
defined by 
\[\theta(\a_1)=-i \text{ and } \theta(\a_2)=i,\] 
and extend it  linearly. Consider the Lie algebra homomorphism on $\overline{\mathfrak{n}}[\hnu]$ defined by 
\[ x_{\a}^{\hnu}(m) \mapsto \theta(\a)x_{\a}^{\hnu}(m+\left<\a_{(0)}, \gamma\right>) \]
which extends to an automorphism of $U(\overline{\mathfrak{n}}[\hnu])$, which we will denote by $\tau_{\gamma,\theta}$. In particular, we have 
\be\begin{aligned}
\tau_{\gamma, \theta} (&\xa{1}{m_1}\cdots \xa{1}{m_r}\xb{n_1}\cdots\xb{n_s})\\&=(-i)^r\xa{1}{m_1+\frac{1}{2}}\cdots\xa{1}{m_r+\frac{1}{2}}\xb{n_1+1}\cdots\xb{n_s+1}\end{aligned}\ee
for $m_1, \dots, m_r \in \frac{1}{4} \mathbb{Z}$ and $n_1, \dots, n_s \in \mathbb{Z}$. We also have
\be
\tau^{-1}_{\gamma,\theta}=\tau_{-\gamma,\theta^{-1}}.\ee

\begin{lem} \label{lemma_ideals}
We have 
\be \label{gamma_ideals}
\tau_{\gamma, \theta}(I_{k,k}) = I_{k,0}.
\ee
\end{lem}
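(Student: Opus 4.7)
Since $\tau_{\gamma,\theta}$ is an algebra automorphism of $U(\overline{\mathfrak{n}}[\hnu])$, the asserted equality will follow from checking the two inclusions $\tau_{\gamma,\theta}(I_{k,k})\subseteq I_{k,0}$ and $\tau_{\gamma,\theta}^{-1}(I_{k,0})\subseteq I_{k,k}$ on convenient generating sets; note $\tau_{\gamma,\theta}^{-1}=\tau_{-\gamma,\theta^{-1}}$. All the computations rely on the explicit shift formulas: $\tau_{\gamma,\theta}$ sends $x_{\alpha_1}^{\hnu}(m)\mapsto -i\, x_{\alpha_1}^{\hnu}(m+\tfrac{1}{2})$ and $x_{\alpha_1+\alpha_2}^{\hnu}(n)\mapsto x_{\alpha_1+\alpha_2}^{\hnu}(n+1)$.

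For the forward inclusion I would first observe that the shift preserves strict positivity of modes, so $\tau_{\gamma,\theta}(\overline{\mathfrak{n}}[\hnu]_{+})\subseteq \overline{\mathfrak{n}}[\hnu]_{+}\subseteq I_{k,0}$. Next, the two ``extra'' generators of $I_{k,k}$ beyond $I_{k,0}$ map to $\tau_{\gamma,\theta}(x_{\alpha_1}^{\hnu}(-\tfrac{1}{4}))=-i\, x_{\alpha_1}^{\hnu}(\tfrac{1}{4})$ and $\tau_{\gamma,\theta}(x_{\alpha_1+\alpha_2}^{\hnu}(-1))=x_{\alpha_1+\alpha_2}^{\hnu}(0)$, both of which lie in $\overline{\mathfrak{n}}[\hnu]_{+}\subseteq I_{k,0}$. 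Third, for the $R^{0}$-generators of $J$, I would use the termwise identity
\[
\tau_{\gamma,\theta}\bigl(R(\alpha_1,\alpha_1+\alpha_2,r|t)\bigr)=(-i)^{k+1-r}\,R\bigl(\alpha_1,\alpha_1+\alpha_2,r\,\big|\,t-\tfrac{k+1+r}{2}\bigr),
\]
together with $\tau_{\gamma,\theta}(R(\alpha_1+\alpha_2|t))=R(\alpha_1+\alpha_2|t-(k+1))$, which follow directly from the shift rules since each of the $k{+}1{-}r$ factors $x_{\alpha_1}^{\hnu}$ contributes a scalar $-i$ and shifts its mode by $\tfrac{1}{2}$, while each of the $r$ factors $x_{\alpha_1+\alpha_2}^{\hnu}$ contributes $1$ and shifts its mode by $1$. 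Decomposing $R=R^{0}+a$ on both sides, with $a$ in the appropriate completion of $U(\overline{\mathfrak{n}}[\hnu])\overline{\mathfrak{n}}[\hnu]_{+}$, one concludes that $\tau_{\gamma,\theta}(R^{0}(\,\cdot\,|t))$ is (up to a nonzero scalar) $R^{0}$ at the shifted parameter, modulo $U(\overline{\mathfrak{n}}[\hnu])\overline{\mathfrak{n}}[\hnu]_{+}$. This remaining $R^{0}$ term is either an empty (zero) sum or, using the ordered expansion of Lemma \ref{relationreorder}, can be rewritten as a combination of elements of $J$ and of $U(\overline{\mathfrak{n}}[\hnu])\overline{\mathfrak{n}}[\hnu]_{+}$.

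The reverse inclusion is entirely symmetric. Under $\tau_{\gamma,\theta}^{-1}$ the positive modes $x_{\alpha_1}^{\hnu}(\tfrac{1}{4})$ and $x_{\alpha_1+\alpha_2}^{\hnu}(0)$—the only positive modes that cross into negative-mode territory—pull back to $i\,x_{\alpha_1}^{\hnu}(-\tfrac{1}{4})$ and $x_{\alpha_1+\alpha_2}^{\hnu}(-1)$, which are precisely the extra generators of $I_{k,k}$; every other positive mode stays positive, and the $R^{0}$-generators are handled by the inverse of the identity above. The main obstacle in carrying out the full proof is the bookkeeping in the $R^{0}$ step: one must carefully monitor the interplay between the shift on $t$ and the generating thresholds $t\geq (k+1+3r)/4$ and $t\geq k+1$ for $J$, verify that the ``completion'' remainders from $R-R^{0}$ actually land in $U(\overline{\mathfrak{n}}[\hnu])\overline{\mathfrak{n}}[\hnu]_{+}$, and, when the shifted parameter $t'$ falls below the generating threshold while $R^{0}(\,\cdot\,|t')$ is still nonzero, invoke the ordered form of Lemma \ref{relationreorder} to exhibit it as an element of $I_{k,0}$.
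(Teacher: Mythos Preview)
Your approach is the same as the paper's: act by $\tau_{\gamma,\theta}$ on each type of generator (the positive-mode ideal, the $R^{0}$-families, and the two extra generators $x_{\alpha_1}^{\hnu}(-\tfrac14)$ and $x_{\alpha_1+\alpha_2}^{\hnu}(-1)$) and read off the inclusions. The paper's own proof is in fact terser than your sketch---it records the identity
\[
\tau_{\gamma,\theta}\bigl(R^{0}(\alpha_1,\alpha_1+\alpha_2,r|t)\bigr)=R^{0}\bigl(\alpha_1,\alpha_1+\alpha_2,r\,\big|\,t-\tfrac{k+1+r}{2}\bigr)+a,\qquad a\in U(\overline{\mathfrak{n}}[\hnu])\overline{\mathfrak{n}}[\hnu]_{+},
\]
notes the analogous identity for $R^{0}(\alpha_1+\alpha_2|t)$, observes that the extra generators map into $\overline{\mathfrak{n}}[\hnu]_{+}$, and then simply asserts that the equality follows. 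So your outline is correct, and you are being more explicit than the paper about the reverse inclusion.

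One remark on the ``main obstacle'' you flag at the end: you do not need Lemma~\ref{relationreorder} for the sub-threshold case. In any monomial appearing in $R^{0}(\alpha_1,\alpha_1+\alpha_2,r|t')$ the sum of all the modes equals $-t'$. If every mode were strictly negative, then with $k+1-r$ modes in $\tfrac14+\tfrac12\mathbb{Z}$ each $\le -\tfrac14$ and $r$ integer modes each $\le -1$, the total would be at most $-(k+1+3r)/4$, forcing $t'\ge (k+1+3r)/4$. Thus for $t'$ below the threshold every monomial contains a nonnegative mode; commuting that mode to the right lands the monomial in $U(\overline{\mathfrak{n}}[\hnu])\overline{\mathfrak{n}}[\hnu]_{+}$, and each nonzero commutator term (which has one fewer $x_{\alpha_1}^{\hnu}$-factor and one more $x_{\alpha_1+\alpha_2}^{\hnu}$-factor, with the same mode sum $-t'$) again has a nonnegative mode by the same count, so an induction on the number of $x_{\alpha_1}^{\hnu}$-factors finishes it. The reverse-direction bookkeeping is similarly direct: under $\tau_{\gamma,\theta}^{-1}$ the shifted $R^{0}$ has \emph{larger} parameter $t+\tfrac{k+1+r}{2}$, hence stays above threshold, and the discrepancy between $\tau_{\gamma,\theta}^{-1}(R^{0}(r|t))$ and $R^{0}(r|t+\tfrac{k+1+r}{2})$ consists of monomials containing a factor $x_{\alpha_1}^{\hnu}(-\tfrac14)$ or $x_{\alpha_1+\alpha_2}^{\hnu}(-1)$, which are exactly the extra generators of $I_{k,k}$.
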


\begin{proof}
Notice that for any $0 \leq r \leq k$ and $t \in (1/4) \mathbb{Z}$ we have
\be \label{computations}
\tau_{\gamma, \theta}  \left ( R^0(\alpha_1, \alpha_1+\alpha_2, r | t)  \right)= R^0 \left (\alpha_1, \alpha_1+\alpha_2, r | t- \frac{1}{2} (k+1+r)   \right) + a
\ee
for some $a \in U(\overline{\n}[\hnu])\overline{\n}[\hnu]_{+} $
and for $t \in \mathbb{Z}$ we have 
\be
\tau_{\gamma, \theta}  \left ( R^0(\alpha_1, \alpha_1+\alpha_2 | t)  \right)= R^0(\alpha_1, \alpha_1+\alpha_2 | t- (k+1))+ a,
\ee
for some $a \in U(\overline{\n}[\hnu])\overline{\n}[\hnu]_{+} $, up to nonzero constants due to the character $\theta$. 
It is obvious that
\[
\tau_{\gamma, \theta} \left (U(\overline{\n}[\hnu])\overline{\n}[\hnu]_{+} \right ) \subset U(\overline{\n}[\hnu])\overline{\n}[\hnu]_{+}
\]
and 
\[
x_{\alpha_+\alpha_2}^{\hnu}(0)= \tau_{\gamma, \theta} (x_{\alpha_1+\alpha_2}^{\hnu} (-1)), \; \; \; 
x_{\a_1}^{\hnu} \left ( \frac{1}{4} \right )= \tau_{\gamma, \theta}  \left (x_{\a_1}^{\hnu}  \left ( - \frac{1}{4} \right ) \right )
\]
Now (\ref{gamma_ideals}) follows.
\end{proof}

We recall the map 
\be\begin{aligned}
\psi_{\gamma,\theta}:U(\overline{\mathfrak{n}}[\hnu])&\to U(\overline{\mathfrak{n}}[\hnu])\\
a&\mapsto \tau^{-1}_{\gamma,\theta}(a)\xa{1}{-\frac{1}{4}}\end{aligned}\ee
from \cite{CalLM4}.
Then for any $a \in U(\overline{\mathfrak{n}}[\hnu])$ we have 
\be
\pt(a)=a\xa{1}{-\frac{1}{4}}.
\ee

\begin{lem}We have
\be
\pt(I_{k,k})\subset I_{k, k-1}.\ee
\end{lem}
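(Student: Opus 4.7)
The plan is to verify $\psi_{\gamma,\theta}(g)\in I_{k,k-1}$ for each generator $g$ of the left ideal $I_{k,k}$. This reduction is justified by the twisted-multiplicative identity
\begin{equation*}
\psi_{\gamma,\theta}(ua)\;=\;\tau_{\gamma,\theta}^{-1}(u)\,\tau_{\gamma,\theta}^{-1}(a)\,\xa{1}{-\tfrac14}\;=\;\tau_{\gamma,\theta}^{-1}(u)\cdot \psi_{\gamma,\theta}(a),
\end{equation*}
combined with the fact that $\tau_{\gamma,\theta}^{-1}$ is an automorphism of $U(\overline{\n}[\hnu])$ and that $I_{k,k-1}$ is a left ideal. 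The generators of $I_{k,k}$ split into two kinds: the generators of $I_{k,0}$ (namely the $R^0$-relations and elements of $\overline{\n}[\hnu]_{+}$), and the two additional monomials $\xb{-1}$ and $\xa{1}{-\tfrac14}$ that appear when $\ell\in\{0,1\}$ in the definition of $I_{k,k}$.

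For the two additional monomials, direct application of $\tau_{\gamma,\theta}^{-1}$ (using $\theta(\alpha_1+\alpha_2)=1$ and $\theta(\alpha_1)=-i$) gives
\begin{equation*}
\psi_{\gamma,\theta}(\xb{-1})\;=\;\xb{-2}\,\xa{1}{-\tfrac14},\qquad \psi_{\gamma,\theta}(\xa{1}{-\tfrac14})\;=\;i\,\xa{1}{-\tfrac34}\,\xa{1}{-\tfrac14}.
\end{equation*}
The goal is to rewrite each of these as a $U(\overline{\n}[\hnu])$-combination of the length-two generators $\xa{1}{-\tfrac14}^{\ell}\xb{-1}^{2-\ell}$ of $I_{k,k-1}$, modulo $I_{k,0}\subset I_{k,k-1}$. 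This is done by reordering with (\ref{commutator1})--(\ref{commutator2}) (in particular, $\xb{-2}$ commutes with $\xa{1}{-\tfrac14}$) and then invoking the $R^0(\alpha_1,\alpha_1+\alpha_2,k|t)$-relations in $I_{k,0}$ at appropriately shifted weights, whose most balanced terms in the sense of Lemma \ref{relationreorder} exactly supply the required monomials ending in $\xa{1}{-\tfrac14}$.

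For the $I_{k,0}$-generators $g$, Lemma \ref{lemma_ideals} yields $\tau_{\gamma,\theta}^{-1}(g)\in I_{k,k}$, so I decompose $\tau_{\gamma,\theta}^{-1}(g)=g_0+v_1\, \xb{-1}+v_2\, \xa{1}{-\tfrac14}$ with $g_0\in I_{k,0}$ and $v_i\in U(\overline{\n}[\hnu])$. After right-multiplying by $\xa{1}{-\tfrac14}$ and commuting with (\ref{commutator2}), the $v_1$- and $v_2$-pieces visibly lie in $U\cdot\xa{1}{-\tfrac14}\xb{-1}\subset I_{k,k-1}$ and $U\cdot\xa{1}{-\tfrac14}^2\subset I_{k,k-1}$, respectively. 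The surviving piece $g_0\cdot\xa{1}{-\tfrac14}$ is handled by moving $\xa{1}{-\tfrac14}$ past the factors of $g_0$: the straightened term $\xa{1}{-\tfrac14}\cdot g_0\in U\cdot I_{k,0}\subset I_{k,0}$, while each commutator (\ref{commutator1}) replaces a factor $\xa{1}{m}$ by a multiple of $\xb{m-\tfrac14}$, and these remain inside the appropriate $R^0$-family of $I_{k,0}$ at a shifted weight (vanishing when the mode is incompatible).

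The main obstacle is the bookkeeping in the second paragraph: identifying the explicit $R^0$-relation that rewrites the ``unbalanced'' pairs $\xb{-2}\xa{1}{-\tfrac14}$ and $\xa{1}{-\tfrac34}\xa{1}{-\tfrac14}$ in terms of the length-two balanced monomials $\xa{1}{-\tfrac14}^{\ell}\xb{-1}^{2-\ell}$. This requires tracing the full expansion of $R^0(\alpha_1,\alpha_1+\alpha_2,k|t)$ including the commutator $\xb{-1}=c\,[\xa{1}{-\tfrac34},\xa{1}{-\tfrac14}]$ of (\ref{commutator_dot_1}) and absorbing the positive-mode tails into $\overline{\n}[\hnu]_{+}\subset I_{k,0}$, rather than any single clever manipulation.
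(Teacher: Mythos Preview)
You have misidentified the map in the statement. The symbol $\pt$ stands for the composite $\psi_{\gamma,\theta}\circ\tau_{\gamma,\theta}$, which by definition acts as $a\mapsto a\cdot\xa{1}{-\tfrac14}$; see the displayed identity immediately preceding the lemma. Your argument instead applies $\psi_{\gamma,\theta}$ alone (your ``twisted-multiplicative identity'' and your explicit evaluations on the two extra generators are computations of $\psi_{\gamma,\theta}$, not of $\pt$). Consequently you are attempting to prove the strictly stronger containment $\psi_{\gamma,\theta}(I_{k,k})\subset I_{k,k-1}$.

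That stronger containment is false for $k>1$, and your proposed verification of it breaks exactly where one would expect. You claim that the degree-two monomials $\xa{1}{-\tfrac34}\xa{1}{-\tfrac14}$ and $\xb{-2}\xa{1}{-\tfrac14}$ can be rewritten, modulo $I_{k,0}$, as $U(\overline{\n}[\hnu])$-combinations of $\xa{1}{-\tfrac14}^{\ell}\xb{-1}^{2-\ell}$ by invoking $R^0(\alpha_1,\alpha_1+\alpha_2,k\mid t)$. But every $R^0$-relation in $I_{k,0}$ has length $k+1$ and charge at least $k+1$; for $k>1$ these cannot interact with a charge-$2$ (resp.\ charge-$3$) length-two monomial, so no such reduction is available. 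In fact $\xa{1}{-\tfrac34}\xa{1}{-\tfrac14}\cdot v_{k,k-1}\neq 0$ for $k>1$, so this element does not lie in $I_{k,k-1}$.

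Once the correct map is used the difficulty disappears, and this is what the paper does. Since $\pt(a)=a\,\xa{1}{-\tfrac14}$, the two extra generators of $I_{k,k}$ are sent to $\xa{1}{-\tfrac14}^{2}$ and $\xb{-1}\xa{1}{-\tfrac14}$, which (after commuting via (\ref{commutator2})) are among the defining generators of $I_{k,k-1}$; no $R^0$-manipulation is needed here. For the $J$-generators one right-multiplies by $\xa{1}{-\tfrac14}$ and commutes it to the left using (\ref{commutator1}), obtaining $\xa{1}{-\tfrac14}R^0(\alpha_1,\alpha_1+\alpha_2,r\mid t)$ plus a constant multiple of $R^0(\alpha_1,\alpha_1+\alpha_2,r{+}1\mid t+\tfrac14)$ plus an element of $U(\overline{\n}[\hnu])\overline{\n}[\hnu]_{+}$, all of which lie in $I_{k,0}\subset I_{k,k-1}$. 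Your third paragraph contains the germ of this last computation, but routed through an unnecessary detour via Lemma~\ref{lemma_ideals}.
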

\begin{proof}

Need to show that $I_{k,k} x_{\a_1}^{\hnu} \left ( - \frac{1}{4} \right ) \subset I_{k,k-1}$. Repeated applications of (\ref{commutator1}) imply that 
\be \label{products}
 \left [ \prod_{j=1}^s \xa{1}{n_j}, \xa{1}{-\frac{1}{4}} \right ]= \sum_{j=1}^s c_j\left(\prod^s_{\substack{r=1\\r\neq j}}\xa{1}{n_r}\right)\xb{n_j-\frac{1}{4}}
\ee
for nonzero constants $c_j$. Then it follows that for $0\leq r\leq k$ we have
\be
 R^0(\a_1,\a_1+\a_2,r|t) \xa{1}{-\frac{1}{4}} =\xa{1}{-\frac{1}{4}}R^0(\a_1,\a_1+\a_2,r|t)+aR^0\left(\a_1,\a_1+\a_2,r+1|t+\frac{1}{4}\right)+b 
\ee
where $a$ is a constant related to the constants $c_j$ in (\ref{products}) and $b\in U(\overline{\mathfrak{n}}[\hnu])\overline{\mathfrak{n}}[\hnu]_{+} $. 
We identify $R^0\left(\a_1,\a_1+\a_2,k+1|t\right)=R^0(\a_1+\a_2|t)$ as appropriate. Further,
\be R^0(\a_1+\a_2|t) \xa{1}{-\frac{1}{4}} =\xa{1}{-\frac{1}{4}}R^0(\a_1+\a_2|t).
\ee
Thus 
\be 
\pt (J) \subset J + U(\overline{\mathfrak{n}}[\hnu]) \overline{\mathfrak{n}}[\hnu]_{+}.
\ee
One can see easily that 

$
\pt \left ( U(\overline{\mathfrak{n}}[\hnu])\overline{\mathfrak{n}}[\hnu]_{+}+ U(\overline{\mathfrak{n}}[\hnu])x_{a_1} ^{\hnu} \left ( - \frac{1}{4} \right ) + U(\overline{\mathfrak{n}}[\hnu]) x_{\a_1+\a_2}^{\hnu}(-1) \right ) \subset U(\overline{\mathfrak{n}}[\hnu])\overline{\mathfrak{n}}[\hnu]_{+}+ U(\overline{\mathfrak{n}}[\hnu])x_{a_1} ^{\hnu} \left ( - \frac{1}{4} \right )^2 + U(\overline{\mathfrak{n}}[\hnu]) x_{\a_1+\a_2}^{\hnu}(-1)x_{a_1} ^{\hnu} \left ( - \frac{1}{4} \right ).
$
Therefore, we have $$\pt(I_{k,k})\subset I_{k, k-1}.$$

\end{proof}

We also have the following result:

\begin{lem} \label{inclusion_ideals}
For all $s,i \ge 0$ with $0 \leq i+s \leq k$ we have:
\begin{equation}
I_{k,k} x_{\alpha_1}^{\hat{\nu}} \left ( - \frac{1}{4} \right )^{k-i-s}\xb{-1}^s \subset I_{k,i}.
\end{equation}

\begin{proof}  We first prove that we have
\begin{equation} \label{i_inclusion}
I_{k,k} x_{\alpha_1}^{\hat{\nu}} \left ( - \frac{1}{4} \right )^{k-i} \subset I_{k,i}
\end{equation}
for all $0 \leq i \leq k$. One can see that
\be \nonumber \begin{aligned}
& R^0(\a_1, \a_1+\a_2, r | t) x_{\a_1}^{\hnu} \left ( - \frac{1}{4} \right )^{k-i} \\ & 
= \sum_{j=0}^{k-i} x_{\a_1}^{\hnu} \left ( - \frac{1}{4} \right )^j R^0 \left (\a_1, \a_1+\a_1+\a_2, r+k-i-j | t+ \frac{(k-i-j)(k-i)}{4} \right) + a,
\end{aligned}
\ee
where $a \in U(\overline{\mathfrak{n}}[\hnu])\overline{\mathfrak{n}}[\hnu]_{+} $, and 
\be \nonumber 
R^0(\a_1, \a_1+\a_2 | t) x_{\a_1}^{\hnu} \left ( - \frac{1}{4} \right )^{k-i} = x_{\a_1}^{\hnu} \left ( - \frac{1}{4} \right )^{k-i}R^0(\a_1, \a_1+\a_2 | t),
\ee
and
\be \nonumber
 R^0(\a_1+\a_2 | t) x_{\a_1}^{\hnu} \left ( - \frac{1}{4} \right )^{k-i} = x_{\a_1}^{\hnu} \left ( - \frac{1}{4} \right )^{k-i}R^0(\a_1+\a_2 | t).
\ee
Then 
\be \nonumber \begin{aligned}
& I_{k,k} x_{\a1}^{\hnu} \left ( -\frac{1}{4} \right )^{k-i} \\ &
\subset J + U(\overline{\mathfrak{n}}[\hnu])\overline{\mathfrak{n}}[\hnu]_{+}  + U(\overline{\mathfrak{n}}) x_{\a_1}^{\hnu}  \left ( -\frac{1}{4} \right ) ^{k+1-i} + U(\overline{\mathfrak{n}}) x_{\a_1+\a_2}^{\hnu} (-1) x_{\a_1}^{\hnu} \left ( -\frac{1}{4} \right )^{k-i} \subset I_{k,i}.
\end{aligned}
\ee

Now let $i, s \geq 0$ such that $0 \leq i+s \leq k$. By (\ref{i_inclusion}) we have 
\begin{equation} \nonumber
I_{k,k} x_{\alpha_1}^{\hat{\nu}}\left ( - \frac{1}{4} \right )^{k-i-s} \subset I_{k,i+s}.
\end{equation}
Hence,
\begin{equation} \nonumber \begin{aligned}
& I_{k,k} x_{\alpha_1}^{\hat{\nu}} \left ( - \frac{1}{4} \right )^{k-i-s}\xb{-1}^s \subset I_{k,i+s}\xb{-1}^s\\ 
& =  I_{k,0}\xb{-1}^s + \sum_{\ell = 0}^{k+1-i-s} U(\bar{\frak{n}}[\hat{\nu}])x_{\alpha_1}^{\hat{\nu}}\left(-\frac{1}{4}\right)^\ell x_{\alpha_1 + \alpha_2}^{\hat{\nu}}\left(-1\right)^{k+1-i-s-\ell}\xb{-1}^s\\ 
&\subset I_{k,0} + \sum_{\ell = 0}^{k+1-i-s} U(\bar{\frak{n}}[\hat{\nu}])x_{\alpha_1}^{\hat{\nu}}\left(-\frac{1}{4}\right)^\ell x_{\alpha_1 + \alpha_2}^{\hat{\nu}}\left(-1\right)^{k+1-i-\ell}\\ 
&\subset I_{k,i}, 
\end{aligned}
\end{equation}
where we use the fact that $\xb{-1}$ commutes with all elements of $I_{k,0}$.
\end{proof}
\end{lem}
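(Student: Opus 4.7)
The plan is to prove the inclusion in two stages. First, I would establish the special case $s=0$, namely
\[
I_{k,k}\, x_{\alpha_1}^{\hat\nu}\!\left(-\tfrac{1}{4}\right)^{k-i}\subset I_{k,i}
\]
for every $0 \le i \le k$, and then deduce the general statement from this case by invoking the centrality of $x_{\alpha_1+\alpha_2}^{\hat\nu}(-1)$ in $U(\bar{\mathfrak n}[\hat\nu])$ that was supplied by (\ref{commutator2}).

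To handle the special case, I would check the inclusion on each of the generating families of $I_{k,k} = I_{k,0} + U(\bar{\mathfrak n}[\hat\nu])\, x_{\alpha_1}^{\hat\nu}(-\tfrac{1}{4}) + U(\bar{\mathfrak n}[\hat\nu])\, x_{\alpha_1+\alpha_2}^{\hat\nu}(-1)$. The two singleton generators yield the inclusion directly: $x_{\alpha_1}^{\hat\nu}(-\tfrac{1}{4}) \cdot x_{\alpha_1}^{\hat\nu}(-\tfrac{1}{4})^{k-i} = x_{\alpha_1}^{\hat\nu}(-\tfrac{1}{4})^{k+1-i}$ is the $\ell = k+1-i$ generator of the extra family in $I_{k,i}$, while $x_{\alpha_1+\alpha_2}^{\hat\nu}(-1) \cdot x_{\alpha_1}^{\hat\nu}(-\tfrac{1}{4})^{k-i}$ collapses by (\ref{commutator2}) to $x_{\alpha_1}^{\hat\nu}(-\tfrac{1}{4})^{k-i}\, x_{\alpha_1+\alpha_2}^{\hat\nu}(-1)$, the $\ell = k-i$ generator. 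For the $J$-type generators, I would commute $x_{\alpha_1}^{\hat\nu}(-\tfrac{1}{4})^{k-i}$ past $R^0(\alpha_1, \alpha_1+\alpha_2, r \mid t)$ using (\ref{commutator1}); each nontrivial commutator replaces a pair $x_{\alpha_1}^{\hat\nu}(p)\, x_{\alpha_1}^{\hat\nu}(-\tfrac{1}{4})$ by a constant multiple of $x_{\alpha_1+\alpha_2}^{\hat\nu}(p - \tfrac{1}{4})$, producing a sum of terms of the shape $x_{\alpha_1}^{\hat\nu}(-\tfrac{1}{4})^{j} R^0(\alpha_1, \alpha_1+\alpha_2, r+k-i-j \mid t')$ modulo an element of $U(\bar{\mathfrak n}[\hat\nu])\bar{\mathfrak n}[\hat\nu]_+$; the $R^0(\alpha_1+\alpha_2 \mid t)$ generators pass through freely by (\ref{commutator2}).

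With the special case in hand, the general statement follows by applying it with $i$ replaced by $i+s$, giving $I_{k,k}\, x_{\alpha_1}^{\hat\nu}(-\tfrac{1}{4})^{k-i-s} \subset I_{k,i+s}$, and then right-multiplying on both sides by $x_{\alpha_1+\alpha_2}^{\hat\nu}(-1)^s$. Centrality keeps $I_{k,0}\, x_{\alpha_1+\alpha_2}^{\hat\nu}(-1)^s$ inside $I_{k,0}$, and each extra generator becomes $x_{\alpha_1}^{\hat\nu}(-\tfrac{1}{4})^{\ell} x_{\alpha_1+\alpha_2}^{\hat\nu}(-1)^{k+1-i-\ell}$ with $0 \le \ell \le k+1-i-s \le k+1-i$, which lies in $I_{k,i}$ by definition.

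The main obstacle is the bookkeeping for the $J$-type commutation step: one must verify that each shifted $R^0(\alpha_1, \alpha_1+\alpha_2, r+k-i-j \mid t')$ appearing in the expansion really lies in $J$. The key observation is that the inequality $t \ge (k+1+3r)/4$ defining $J$ is tight—it is precisely the threshold at which the mode constraints $p_j \le -\tfrac14$ and $p_l \le -1$ admit any valid monomial. Consequently, any shifted term whose new parameters violate its analogous inequality vanishes identically, while the surviving terms are automatically $J$-generators; this dichotomy, together with the balanced decomposition of Lemma \ref{relationreorder} to track the extremal terms, closes the argument without any additional case analysis.
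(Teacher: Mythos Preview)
Your proposal is correct and follows essentially the same two-stage structure as the paper's proof: first establish the $s=0$ case $I_{k,k}\,x_{\alpha_1}^{\hat\nu}(-\tfrac14)^{k-i}\subset I_{k,i}$ by commuting $x_{\alpha_1}^{\hat\nu}(-\tfrac14)^{k-i}$ through the $R^0$-generators via (\ref{commutator1})--(\ref{commutator2}), then deduce the general case by replacing $i$ with $i+s$ and right-multiplying by the central element $x_{\alpha_1+\alpha_2}^{\hat\nu}(-1)^s$. The only superfluous ingredient is your appeal to Lemma~\ref{relationreorder}: the paper does not invoke it here, since the shifted relations $R^0(\alpha_1,\alpha_1+\alpha_2,r+k-i-j\mid t')$ either lie in $J$ or vanish by the tightness observation you already made, without any need to track ``balanced'' terms.
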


Consider the linear map 
\begin{equation} 
e_{\alpha_1}:V_L^T \longrightarrow V_L^T,
\end{equation}
and its restriction to the principal subspace $W_L^T$
\begin{equation} \label{e-map}
e_{\alpha_1}:W_L^T \longrightarrow W_L^T
\end{equation}
Then
\begin{equation}
e_{\alpha_1} (a \cdot 1)=A_{C( \cdot, \cdot )}^{\sigma (\cdot), \theta (\cdot )} \psi_{\gamma, \theta} (a) \cdot 1,
\end{equation}
where $a \in U(\overline{\mathfrak{n}}[\hnu])$ and $A_{C( \cdot, \cdot )}^{\sigma (\cdot), \theta (\cdot )}$ is a nonzero constant depending on the maps $C(\cdot, \cdot)$, $\sigma(\cdot)$ and $\theta(\cdot)$. Now consider the linear map
\[
e_{\alpha_1}^{\otimes k} :(V_L^T)^{\otimes k} \longrightarrow (V_L^T)^{\otimes k}.
\]
Then we have 
\be \label{e_alpha_k}
e_{\alpha_1}^{\otimes k} :W_{k,0} \longrightarrow W_{k,k}
\ee
where
\be \label{e_alpha_map}
e_{\a_1}^{\otimes k} (a \cdot v_{k,0}) =\tau^{-1}_{\gamma, \theta} (a) \cdot v_{k,k},
\ee
up to certain nonzero constants.

Let $\lambda_1$ be the fundamental weight of $\mathfrak{g}$ defined by:
\[
\lambda_1=\frac{2}{3} \a_1+\frac{1}{3} \a_2.
\]
We recall from \cite{CalLM4} the operators 
\be
\Delta^T(\lambda_1, -x) =i^{2 {\lambda_1}_{(0)}} x^{{\lambda_1}_{(0)}} E^{+} (-\lambda_1, x) \in (\mbox{End} \; V_L^T) [[x^{1/4}, x^{-1/4}]],
\ee
where ${\lambda_1}_{(0)}= \frac{1}{2} (\a_1+\a_2)$. Denote by $\Delta_c^T (\lambda_1, -x)$ the constant term of $\Delta^T(\lambda_1, -x)$. Recall also 
\[
\Delta_c^T(\lambda_1, -x) : W_L^T \longrightarrow W_L^T, \; \; \; \; \; 
a \cdot 1 \mapsto \tau_{\gamma, \theta} (a) \cdot 1.
\]

We now define the following operators:
\be \label{Y_operator}
\mathcal{Y}_i = 1_V \otimes \dots \otimes 1_V \otimes e_{\alpha_1} \circ \Delta_c^T(-\lambda_1,x) \otimes 1_V \dots \otimes 1_V : (V_L^T) ^{\otimes k} \longrightarrow (V_L^T)^{\otimes k},
\ee
where the non-identity operator occurs at the $i$-th slot. Using the operators $\mathcal{Y}_{k-i}$ one can show that
\be
\mbox{Ker} \; f_{k, i} \subset \mbox{Ker} \; f_{k, i+1}
\ee
for any $0 \leq i \leq k-1$.

\section{Presentations of the subspaces $W_{k,i}$}
We now have the necessary ingredients to prove a presentation of the spaces $W_{k,i}$.
\begin{thm} \label{presentation}
For $i=0, \dots, k$ we have
\be
\mbox{Ker} \; f_{k,i}= I_{k,i}. 
\ee
\end{thm}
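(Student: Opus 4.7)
The plan is to prove the two containments separately. For the elementary direction $I_{k,i}\subseteq \mathrm{Ker}\,f_{k,i}$, I would verify that each generator annihilates $v_{k,i}$: the $R^0$-relations do so as formal truncations of the vertex operator identities (\ref{vertexoprel1})--(\ref{vertexoprel2}) of Theorem \ref{RelThm} applied to $v_{k,i}$; the positive modes $\bar{\mathfrak{n}}[\hnu]_+$ annihilate by the highest weight property of $v_{k,i}$; and the extra monomials $x_{\alpha_1}^{\hnu}(-1/4)^{\ell}\,x_{\alpha_1+\alpha_2}^{\hnu}(-1)^{k+1-i-\ell}$ annihilate $v_{k,i}$ after a pigeonhole on the diagonal action (\ref{diagonal_action}), using the level-one annihilation relations (\ref{vertexoplevel1relation1}), (\ref{relation_A22}) and (\ref{vertexoplevel1relation4}) together with Lemma \ref{lemma1} to bound the number of each operator type a single tensor factor can absorb nontrivially.

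For the reverse inclusion I would do a nested induction. The outer induction is descending on $i$ from $k$ to $0$; the map $e_{\alpha_1}^{\otimes k}$ of (\ref{e_alpha_k})--(\ref{e_alpha_map}) combined with Lemma \ref{lemma_ideals} identifies the pair $(\mathrm{Ker}\,f_{k,0},I_{k,0})$ with $(\mathrm{Ker}\,f_{k,k},I_{k,k})$ via $\tau_{\gamma,\theta}$, allowing the base case $i=k$ and the terminal case $i=0$ to be handled simultaneously. For the inductive step, assume the result for all indices greater than $i$ and take $a\in \mathrm{Ker}\,f_{k,i}$. The operator $\mathcal{Y}_{k-i}$ of (\ref{Y_operator}) provides the chain inclusion $\mathrm{Ker}\,f_{k,i}\subseteq \mathrm{Ker}\,f_{k,i+1}$, so the outer hypothesis places $a\in I_{k,i+1}$. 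Unpacking this, one obtains
\[
a \;\equiv\; \sum_{\ell=0}^{k-i} c_\ell\, x_{\alpha_1}^{\hnu}(-1/4)^{\ell}\, x_{\alpha_1+\alpha_2}^{\hnu}(-1)^{k-i-\ell} \quad \pmod{I_{k,i}}
\]
for some $c_\ell\in U(\bar{\mathfrak{n}}[\hnu])$, and the task of the inner induction is to show this residue already lies in $I_{k,i}$.

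The inner induction descends on the maximal power of $x_{\alpha_1}^{\hnu}(-1/4)$ appearing. At each step I would commute $x_{\alpha_1}^{\hnu}(-1/4)$ past the factors of $c_\ell$ using Lemma \ref{lemma2}, use the reordering of Lemma \ref{relationreorder} to match the result against a balanced $R^0$-relation, and invoke Lemma \ref{inclusion_ideals} to absorb the lower-power remainders into $I_{k,i}$. Iterating, the residue reduces modulo $I_{k,i}$ to a single term of the form $c_0\,x_{\alpha_1+\alpha_2}^{\hnu}(-1)^{k-i}$. Evaluating on $v_{k,i}$ and using the explicit form of $x_{\alpha_1+\alpha_2}^{\hnu}(-1)^{k-i}\cdot v_{k,i}$ (computed through iterated use of (\ref{commutator_dot_1}) on the vacuum tensor factors and (\ref{vertexoplevel1relation4}) on the $e_{\alpha_1}$-tensor factors) will then force $c_0$ itself into $I_{k,i}$, closing the inductive step.

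The main obstacle will be the combinatorics of the inner induction. Each pass of $x_{\alpha_1}^{\hnu}(-1/4)$ through a word of the type described by Lemma \ref{relationreorder} generates a cascade of commutator residues that must be carefully sorted into (i) terms absorbable by existing ideal generators via Lemma \ref{inclusion_ideals}, (ii) terms rewriting as balanced $R^0$-relations, or (iii) terms decoupled by positive-mode annihilation. The untwisted strategy of \cite{S1} supplies a template for this bookkeeping, but the quarter-integer modes specific to $A_2^{(2)}$ and the identifications of Lemma \ref{lemma1} between $x_{\alpha_1}^{\hnu}$ and $x_{\alpha_2}^{\hnu}$ on the $(1/4+\mathbb{Z})$ and $(3/4+\mathbb{Z})$ sectors impose additional parity tracking that must be carried through every step.
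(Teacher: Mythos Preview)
Your outline captures the broad two-induction architecture, but the mechanism you describe for the inner induction is not the one that actually works, and your outer induction is not strong enough to support it.

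The paper does \emph{not} reduce the power of $x_{\alpha_1}^{\hnu}(-1/4)$ by ``commuting it past the factors of $c_\ell$'' and matching against a balanced $R^0$; there is no algebraic identity in $U(\bar{\mathfrak{n}}[\hnu])$ that would let you absorb an arbitrary $c_\ell\, x_{\alpha_1}^{\hnu}(-1/4)^{\ell}\, x_{\alpha_1+\alpha_2}^{\hnu}(-1)^{k-i-\ell}$ into $I_{k,i}$ this way. What the paper does at each inner step $s$ is act on $v_{k,i}$, expand via the diagonal action, pull off $e_{\alpha_1}^{\otimes k}$, and then apply a composite $\mathcal{Y}_{s+1}\circ\cdots\circ\mathcal{Y}_{k-i}$ to collapse the tensor sum and isolate the single coefficient $\tau_{\gamma,\theta}(c_s^{(s)})$ acting on $e_{\alpha_1}^{\otimes(k-i)}\otimes 1^{\otimes i}$. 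This places $\tau_{\gamma,\theta}(c_s^{(s)})$ in $\mathrm{Ker}\,f_{k,k-i}$, and only \emph{then} (once $c_s^{(s)}$ is known to lie in $I_{k,k}+\sum U\,x_{\alpha_1}^{\hnu}(-3/4)^m x_{\alpha_1+\alpha_2}^{\hnu}(-2)^n$) do the $R^0$-balancing and Lemma~\ref{inclusion_ideals} enter, to rewrite the resulting monomials modulo $I_{k,i}$. Your proposal uses the $\mathcal{Y}_i$ only once (to get $\mathrm{Ker}\,f_{k,i}\subset\mathrm{Ker}\,f_{k,i+1}$), but they are needed at every inner step.

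This also explains why your outer induction on $i$ alone cannot close. The isolation argument produces $\tau_{\gamma,\theta}(c_s^{(s)})\in\mathrm{Ker}\,f_{k,k-i}$, and to conclude it lies in $I_{k,k-i}$ you need the theorem at index $k-i$, which for $i>k/2$ is \emph{not} covered by a descending induction on $i$. The paper resolves this by running a minimal-counterexample argument on \emph{charge} (then weight) simultaneously over all $i$: since $c_s^{(s)}$ has strictly lower charge than $a$, minimality forces $\tau_{\gamma,\theta}(c_s^{(s)})\in I_{k,k-i}$ regardless of how $k-i$ compares to $i$. Your final step, ``evaluating on $v_{k,i}$ \dots will force $c_0$ into $I_{k,i}$,'' has the same defect: the evaluation only yields $\tau_{\gamma,\theta}(c_0)\in\mathrm{Ker}\,f_{k,k-i}$, and without the charge-minimality hypothesis there is nothing to convert this into an ideal membership.
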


\begin{proof} Let $0 \leq i \leq k$. 
Since the inclusion $I_{k,i} \subset {\mbox Ker} f_{k,i}$ is obvious, the remainder of the proof will show that ${\mbox Ker} f_{k,i} \subset I_{k,i}$ for $i=0,\dots,k$. Suppose that for some $\ell =0,\dots,k$ we have ${\mbox Ker} f_{k,\ell} \not\subset I_{k,\ell}$, and consider the set 
$$
\{a \in U(\overline{\mathfrak{n}}[\hnu]) | a \in Kerf_{k,\ell} \setminus I_{k,\ell}  \text{ for some } \ell =0,\dots ,k \}.
$$
We may and do assume that elements of this set have positive weight. Among the elements of this set, we look at those with lowest charge. Among the elements of lowest charge, we choose an element of lowest weight and call it $a$.

We first show that $\ell \neq k$. Suppose $a \in {\mbox Ker} f_{k,k}\setminus I_{k,k}$. Then, we have that 
\begin{equation}
a\cdot (e_{\alpha_1} \otimes \dots \otimes e_{\alpha_1}) = 0,
\end{equation}
and thus by (\ref{e_alpha_map}) we have
\begin{equation}
e_{\alpha_1}^{\otimes k}(\tau_{\gamma, \theta} (a)(1 \otimes \dots \otimes 1)) = 0. 
\end{equation}
Now the injectivity of $e_{\alpha_1}^{\otimes k}$ implies that 
\begin{equation}
\tau_{\gamma, \theta} (a)(1 \otimes \dots \otimes 1) = 0.
\end{equation}
Since $\tau_{\gamma, \theta} (a)$ has lower weight than $a$, we have that $\tau_{\gamma, \theta} (a) \in I_{k,0}$, and so applying Lemma \ref{lemma_ideals} we have that $a \in I_{k,k}$, a contradiction.

We now proceed by induction and assume that we have shown that $\ell \neq k, k-1, \dots, i+1$ for some $0\leq i \leq k-1$ . We will show that this implies $\ell \neq i$ as well. We suppose our minimal element $a \in {\mbox Ker }f_{k,i}\setminus I_{k,i}$. 

We first show, using an inductive proof, that we can write
\be\label{rewritingfinalgoal}
a=b+c\xb{-1}^{k-i}\ee
with $b\in I_{k,i}$ and $c\in U(\bar{\frak{n}}[\hat{\nu}])$.

Since $a\in {\mbox Ker }f_{k,i}\subset  {\mbox Ker} f_{k,i+1}=I_{k,i+1}$ we may write 
\be\label{rewrite1}
a=b^{(0)}+\sum_{j=0}^{k-i}c_j^{(0)}\xa{1}{-\frac{1}{4}}^{k-i-j}\xb{-1}^j,\ee
where $b^{(0)}\in I_{k,0}$ and $c_j^{(0)}\in U(\bar{\frak{n}}[\hat{\nu}])$ for $0 \leq j \leq k-i$. We will proceed inductively towards (\ref{rewritingfinalgoal}) by first showing that 
\be\label{rewrite2}
c_0^{(0)}\xa{1}{-\frac{1}{4}}^{k-i}\in I_{k,i}+\sum_{\substack{m\geq 0, n\geq 1\\m+n=k-i}}U(\bar{\frak{n}}[\hat{\nu}])\xa{1}{-\frac{1}{4}}^m\xb{-1}^n.\ee
Using the decomposition (\ref{rewrite1}), we have 
\be 
(a-b^{(0)})(1^{\otimes (k-i)}\otimes e_{\a_1}^{\otimes i})=0, 
\ee
and thus
\be
\sum_{j=0}^{k-i}c_j^{(0)}\xa{1}{-\frac{1}{4}}^{k-i-j}\xb{-1}^j(1^{\otimes (k-i)}\otimes e_{\a_1}^{\otimes i})=0\ee
Using the diagonal action we can write 
\be
\sum_{j=0}^{k-i}c_j^{(0)}\left(\left(\xa{1}{-\frac{3}{4}}e_{\a_1}\right)^{\otimes j}\otimes e_{\a_1}^{\otimes (k-j)}+w_{0}\right)=0\ee
where $w_{0}$ is a sum of tensor factors all of which have $\xa{1}{-\frac{3}{4}}e_{\a_1}$ as at least one of the first $k-i$ entries or it is zero. We now write
\be
e_{\a_1}^{\otimes k}\left(\sum_{j=0}^{k-i}\tau_{\gamma,\theta}(c_j^{(0)})\left(e_{\a_1}^{\otimes j}\otimes 1^{\otimes (k-j)}+w^{'}_{0}\right)\right)=0\ee
where $e_{\a_1}^{\otimes k}(w_0^{'})=w_0$ and thus $w_0^{'}$ is a sum of tensor factors all of which have $e_{\a_1}$ as one of the first $k-i$ entries or is zero. By the injectivity of $e_{\a_1}^{\otimes k}$ we have
\be\label{sum1}\sum_{j=0}^{k-i}\tau_{\gamma,\theta}(c_j^{(0)})\left(e_{\a_1}^{\otimes j}\otimes 1^{\otimes (k-j)}+w^{'}_{0}\right)=0\ee
We now apply the composition operators $\mathcal{Y}_1\circ \mathcal{Y}_2 \circ \cdots \circ \mathcal{Y}_{k-i}$ which collapses (\ref{sum1}) to 
 \be
 \tau_{\gamma,\theta}\left(c_{0}^{(0)}\right)\left(e_{\a_1}^{\otimes k-i}\otimes 1^{\otimes i}\right)=0.\ee
So $\tau_{\gamma,\theta}\left(c_{0}^{(0)}\right)\in {\mbox Ker }f_{k,k-i}$. Since $\tau_{\gamma,\theta}\left(c_{0}^{(0)}\right)$ has lower charge than $a$, then
\be\tau_{\gamma,\theta}\left(c_{0}^{(0)}\right)\in I_{k,k-i}=I_{k,0}+\sum_{\substack{m\geq 0,n\geq 0 \\ m+n=i+1}}U(\bar{\frak{n}}[\hat{\nu}])\xa{1}{-\frac{1}{4}}^m\xb{-1}^n. \ee
So we have, using Lemma \ref{lemma_ideals},
\be
c_{0}^{(0)}\in I_{k,k}+\sum_{\substack{m\geq 0,n\geq 0 \\ m+n=i+1}}U(\bar{\frak{n}}[\hat{\nu}])\xa{1}{-\frac{3}{4}}^m\xb{-2}^n\ee
Now putting this into the context of (\ref{rewrite1}) we have 
\be\begin{aligned}
c_{0}^{(0)}\xa{1}{-\frac{1}{4}}^{k-i}\in& I_{k,k}\xa{1}{-\frac{1}{4}}^{k-i}\\&+\sum_{\substack{m\geq 0,n\geq 0 \\ m+n=i+1}}U(\bar{\frak{n}}[\hat{\nu}])\xa{1}{-\frac{3}{4}}^m\xa{1}{-\frac{1}{4}}^{k-i}\xb{-2}^n\end{aligned}\ee
By Lemma \ref{inclusion_ideals} we have $I_{k,k}\xa{1}{-\frac{1}{4}}^{k-i}\subset I_{k,i}$. Now we focus on the terms $\xa{1}{-\frac{3}{4}}^m\xa{1}{-\frac{1}{4}}^{k-i}\xb{-2}^n$ such that $m+n=i+1$. Each of these is a summand (with non-zero coefficient) of the expression $R^0\left(\a_1,\a_1+\a_2,n|\frac{3m+8n+k-i}{4}\right)$ and is the most ``balanced'' term of this expression. That is,  by Lemma \ref{relationreorder},  every other summand of this expression is either an element of $I_{k,i}$ or a $U(\bar{\frak{n}}[\hat{\nu}])$-multiple of 
$$
\xa{1}{-\frac{1}{4}}^{k-i-j}\xb{-1}^j \text{ for } 1\leq j\leq k-i
$$ 
which verifies (\ref{rewrite2}). Using (\ref{rewrite1}) we have
\be
a=b^{(1)}+\sum_{j=1}^{k-i} c_{j}^{(1)}\xa{1}{-\frac{1}{4}}^{k-i-j}\xb{-1}^{j},\ee
for $b_{(1)}\in I_{k,i}$ and $c_{j}^{(1)}\in U(\bar{\frak{n}}[\hat{\nu}])$.

Now, suppose that we have proved
\begin{equation} \label{induction_s}
a = b^{(s)} + \sum_{j=s}^{k-i}c_{j}^{(s)}\xa{1}{-\frac{1}{4}}^{k-i-j}\xb{-1}^j
\end{equation}
for some $b^{(s)} \in I_{k,i}$ and $c_j^{(s)} \in U(\bar{\frak{n}}[\hat{\nu}]),$ where $1\leq s < k-i$. We show that 
\begin{equation}
a = b^{(s+1)} + \sum_{j=s+1}^{k-i}c_{j}^{(s+1)}\xa{1}{-\frac{1}{4}}^{k-i-j}\xb{-1}^j,
\end{equation}
where $b^{(s+1)} \in I_{k,i}$ and $c_j^{(s+1)} \in U(\bar{\frak{n}}[\hat{\nu}])$.

Using the decomposition (\ref{induction_s}), we have 
\be 
(a-b^{(s)})(1^{\otimes (k-i)}\otimes e_{\a_1}^{\otimes i})=0
\ee
and thus
\be
\sum_{j=s}^{k-i}c_j^{(s)}\xa{1}{-\frac{1}{4}}^{k-i-j}\xb{-1}^j(1^{\otimes (k-i)}\otimes e_{\a_1}^{\otimes i})=0\ee
Using the diagonal action we can write 
\be
\sum_{j=s}^{k-i}c_j^{(s)}\left(\left(\xa{1}{-\frac{3}{4}}e_{\a_1}\right)^{\otimes j}\otimes e_{\a_1}^{\otimes (k-j)}+w_{s}\right)=0\ee
where $w_{s}$ is a sum of tensor factors all of which have $\xa{1}{-\frac{3}{4}}e_{\a_1}$ as at least one of the entries between the $s+1$ and $k-i$ position or is zero. We now write
\be
e_{\a_1}^{\otimes k}\left(\sum_{j=s}^{k-i}\tau_{\gamma,\theta}(c_j^{(s)})\left(e_{\a_1}^{\otimes j}\otimes 1^{\otimes (k-j)}+w^{'}_{s}\right)\right)=0\ee
where $e_{\a_1}^{\otimes k}(w_s^{'})=w_s$ and thus $w_s^{'}$ is a sum of tensor factors all of which have $e_{\a_1}$ as at least one of the entries between the $s+1$ and $k-i$ position or is zero. By the injectivity of $e_{\a_1}^{\otimes k}$ we have
\be\label{sum2}\sum_{j=s}^{k-1}\tau_{\gamma,\theta}(c_j^{(s)})\left(e_{\a_1}^{\otimes j}\otimes 1^{\otimes (k-j)}+w^{'}_{s}\right)=0\ee
Now apply the composition of operators $\mathcal{Y}_{s+1}\circ \mathcal{Y}_{s+2}\circ \cdots \circ \mathcal{Y}_{k-i}$ which collapses (\ref{sum2}) to 
\be 
\tau_{\gamma,\theta}\left(c_s^{(s)}\right)\left(e_{\a_1}^{\otimes (k-i)}\otimes 1^{\otimes i}\right)=0.\ee
So $\tau_{\gamma,\theta}\left(c_{s}^{(s)}\right)\in \text{Ker }f_{k,k-i}$ and has lower charge than $a$, thus 
\be\tau_{\gamma,\theta}\left(c_{s}^{(s)}\right)\in I_{k,k-i}=I_{k,0}+\sum_{\substack{m\geq 0,n\geq 0 \\ m+n=i+1}}U(\bar{\frak{n}}[\hat{\nu}])\xa{1}{-\frac{1}{4}}^m\xb{-1}^n.\ee
Using Lemma \ref{lemma_ideals} again, we have 
\be
c_{s}^{(s)}\in I_{k,k}+\sum_{\substack{m\geq 0,n\geq 0 \\ m+n=i+1}}U(\bar{\frak{n}}[\hat{\nu}])\xa{1}{-\frac{3}{4}}^m\xb{-2}^n.\ee
Now putting this into the context of (\ref{induction_s}) we have 
\be\begin{aligned}
c_{s}^{(s)}&\xa{1}{-\frac{1}{4}}^{k-i-s}\xb{-1}^s\in I_{k,k}\xa{1}{-\frac{1}{4}}^{k-i-s}\xb{-1}^s\\&+\sum_{\substack{m\geq 0,n\geq 0 \\ m+n=i+1}}U(\bar{\frak{n}}[\hat{\nu}])\xa{1}{-\frac{3}{4}}^m\xa{1}{-\frac{1}{4}}^{k-i-s}\xb{-2}^n\xb{-1}^s\end{aligned}.\ee
By Lemma \ref{inclusion_ideals}, we have 
$I_{k,k}\xa{1}{-\frac{1}{4}}^{k-i-s}\xb{-1}^s\subset I_{k,i}$
 so we focus on the terms $\xa{1}{-\frac{3}{4}}^m\xa{1}{-\frac{1}{4}}^{k-i-s}\xb{-2}^n\xb{-1}^s$ such that $m+n=i+1$. Each of these is a summand (with non-zero coefficient) of the expression $R^0\left(\a_1,\a_1+\a_2,n+s|\frac{3m+8n+3s+k-i}{4}\right)$ and is the most ``balanced'' term of this expression. That is, by Lemma \ref{relationreorder}, every other summand of this expression is either an element of $I_{k,i}$ or a $U(\bar{\frak{n}}[\hat{\nu}])$-multiple of 
$$
\xa{1}{-\frac{1}{4}}^{k-i-j}\xb{-1}^j \text{ for } s+1\leq j\leq k-i,
$$ 
which finishes the inductive argument that verifies the claim described by (\ref{rewritingfinalgoal}). Then we write
\be a=b+c\xb{-1}^{k-i}\ee
with $b\in I_{k,i}$ and $c\in U(\bar{\frak{n}}[\hat{\nu}])$. Now since 
\be
(a-b)\left(1^{\otimes (k-i)}\otimes e_{\a_1}^{\otimes i}\right)=0\ee 
we have 
\be
c\xb{-1}^{k-i}\left(1^{\otimes (k-i)}\otimes e_{\a_1}^{\otimes i}\right)=0\ee
and thus 
\be
c\left(\left(\xa{1}{-\frac{3}{4}}e_{\a_1}\right)^{\otimes (k-i)}\otimes e_{\a_1}^{\otimes i}\right)=0\ee
which implies 
\be e^{\otimes k}_{\a_1}\left(\tau_{\gamma,\theta}(c)(e_{\a_1}^{\otimes (k-i)}\otimes 1^{\otimes i})\right)=0\ee
and by the injectivity of $e^{\otimes k}_{\a_1}$ we have 
\be \tau_{\gamma,\theta}(c)(e_{\a_1}^{\otimes (k-i)}\otimes 1^{\otimes i})\ee
so $\tau_{\gamma,\theta}(c)\in \text{Ker }f_{k,k-i}$. Since  $\tau_{\gamma,\theta}(c)$ has lower charge that $a$, then $\tau_{\gamma,\theta}(c)\in I_{k,k-i}$. Then 
\be
c\in I_{k,k}+\sum_{\substack{m\geq 0, n\geq 0\\m+n=i+1}}U(\bar{\frak{n}}[\hat{\nu}])\xa{1}{-\frac{3}{4}}^m\xb{-2}^n,\ee
and so  
\be
c\xb{-1}^{k-i}\in I_{k,k}\xb{-1}^{k-i}+\sum_{\substack{m\geq 0, n\geq 0\\m+n=i+1}}U(\bar{\frak{n}}[\hat{\nu}])\xa{1}{-\frac{3}{4}}^m\xb{-2}^n\xb{-1}^{k-i}.\ee

By Lemma \ref{inclusion_ideals} we have $I_{k,k}\xb{-1}^{k-i}\subset I_{k,i}$, so we focus on the terms $\xa{1}{-\frac{3}{4}}^m\xb{-2}^n\xb{-1}^{k-i}$ for $m+n=i+1$.  Each of these is a summand (with non-zero coefficient) of the expression $R^0\left(\a_1,\a_1+\a_2,n+k-i|\frac{3m+8n+4k-4i}{4}\right)$ and is the most ``balanced'' term of this expression. That is, every other summand of this expression is and element of $I_{k,i}$. So we have $c\xb{-1}^{k-i}\in I_{k,i}$ and thus $a\in I_{k,i}$, a contradiction which finishes the proof.

\end{proof}

\section{Some remarks on short exact sequences and characters}

In this section we construct a set of short exact sequences among certain virtual subspaces. As a consequence we obtain a system of recursions (q-difference equations) for the characters of virtual subspaces. We also define generalizations of the virtual subspaces and conjecture a more general set of exact sequences and recursions. At the end of this section we give a conjecture for a specialized character of the principal subspace $W_{k,0}$ as a Nahm sum of the inverse of the tadpole Cartan matrix. In particular, when $k$ is even, this character is related to a Gollnitz-Gordon-Andrews identity. 

The following proposition shows that the ideal $I_{k,i}$ may be rewritten to look similar to the ideals found in \cite{CLM2} and \cite{CalLM2}.

\begin{prop}\label{ideal_rewrite}
For $m_1,m_2\geq 0$ such that $m_1+m_2\leq k$, we have
\be\label{rewrite1} U(\overline{\mathfrak{n}}[\hat{\nu}])x_{\alpha_1}^{\hat{\nu}}\left(-\frac{1}{4}\right)^{m_1}\xb{-1}^{m_2}\subset J+U(\overline{\mathfrak{n}}[\hat{\nu}])x_{\alpha_1}^{\hat{\nu}}\left(-\frac{1}{4}\right)^{m_1+m_2}\ee
and thus
\be\label{rewrite2}
I_{k,i}=J+U(\overline{\mathfrak{n}}[\hat{\nu}])\xa{1}{-\frac{1}{4}}^{k+1-i}.\ee

\end{prop}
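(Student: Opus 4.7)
The plan is to prove (\ref{rewrite1}) first, by induction on $m_{2}$, and then to deduce (\ref{rewrite2}) directly from it. For (\ref{rewrite1}) the base case $m_{2}=0$ is immediate, since $\xa{1}{-\frac{1}{4}}^{m_{1}}$ already lies in the second summand on the right-hand side. For the inductive step, with $m_{2}\ge 1$ fixed and assuming (\ref{rewrite1}) for all smaller values of $m_{2}$, I would run a secondary downward induction on $m_{1}$, starting at $m_{1}=k-m_{2}$. At each stage the idea is to select a single relation $R^{0}(\a_{1},\a_{1}+\a_{2},r\mid t)\in J$ so that one of the monomials in its reordered form (via Lemma \ref{relationreorder}) is $\xa{1}{-\frac{1}{4}}^{m_{1}}\xb{-1}^{m_{2}}$ with a nonzero coefficient, while every other summand either contains strictly more than $m_{1}+m_{2}$ factors of $\xa{1}{-\frac{1}{4}}$ (and is therefore automatically in $U(\overline{\mathfrak{n}}[\hnu])\xa{1}{-\frac{1}{4}}^{m_{1}+m_{2}}$) or contains strictly fewer than $m_{2}$ factors of $\xb{-1}$ (so the outer inductive hypothesis applies to it).

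Concretely, for the secondary base case $m_{1}+m_{2}=k$, the relation $R^{0}(\a_{1},\a_{1}+\a_{2},m_{2}-1\mid (k+3m_{2})/4)$ works: a direct weight count shows that at this minimal admissible value of $t$ only the $n=0$ and $n=1$ summands in the formula of Lemma \ref{relationreorder} contribute, yielding a two-term identity
\[
c_{0}\,\xa{1}{-\frac{3}{4}}\xa{1}{-\frac{1}{4}}^{k+1-m_{2}}\xb{-1}^{m_{2}-1}+c_{1}\,\xa{1}{-\frac{1}{4}}^{k-m_{2}}\xb{-1}^{m_{2}}\in J
\]
with $c_{0},c_{1}$ nonzero. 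Solving for the second monomial shows that $\xa{1}{-\frac{1}{4}}^{k-m_{2}}\xb{-1}^{m_{2}}$ lies modulo $J$ inside $U(\overline{\mathfrak{n}}[\hnu])\,\xa{1}{-\frac{1}{4}}^{k+1-m_{2}}\xb{-1}^{m_{2}-1}$, which by the outer inductive hypothesis (applied with $m_{1}'=k+1-m_{2}$ and $m_{2}'=m_{2}-1$, so $m_{1}'+m_{2}'=k$) is contained in $J+U(\overline{\mathfrak{n}}[\hnu])\xa{1}{-\frac{1}{4}}^{k}$, completing the secondary base. Higher values of $t$, with extra $\xa{1}{\cdot}$ factors pushed to modes like $-\tfrac{3}{4}$ or deeper, give the analogous two- or three-term identities needed for successively smaller values of $m_{1}$, with the extra terms again absorbed by the outer hypothesis.

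For (\ref{rewrite2}), from the definition $I_{k,i}=I_{k,0}+\sum_{\ell=0}^{k+1-i}U(\overline{\mathfrak{n}}[\hnu])\xa{1}{-\frac{1}{4}}^{\ell}\xb{-1}^{k+1-i-\ell}$, applying (\ref{rewrite1}) with $m_{1}=\ell$ and $m_{2}=k+1-i-\ell$ (so that $m_{1}+m_{2}=k+1-i\le k$ whenever $i\ge 1$) places each generator of the sum in $J+U(\overline{\mathfrak{n}}[\hnu])\xa{1}{-\frac{1}{4}}^{k+1-i}$. Combined with the defining equality $I_{k,0}=J+U(\overline{\mathfrak{n}}[\hnu])\overline{\mathfrak{n}}[\hnu]_{+}$ and the observation that $\xa{1}{-\frac{1}{4}}^{k+1}\in J$ (which handles the $i=0$ boundary through the balanced summand of $R^{0}(\a_{1},\a_{1}+\a_{2},0\mid (k+1)/4)$), one obtains the claimed equality.

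The main obstacle is choosing the appropriate relation in the secondary inductive step when $m_{1}+2m_{2}<k+1$, since in that parameter range no single relation $R^{0}(\a_{1},\a_{1}+\a_{2},r\mid t)$ has $\xa{1}{-\frac{1}{4}}^{m_{1}}\xb{-1}^{m_{2}}$ as the balanced term of an $n=0$ expansion. Here one must use the higher-$n$ summands together with left multiplication by suitable elements of $U(\overline{\mathfrak{n}}[\hnu])$, and combine this with the secondary inductive hypothesis to produce the required identity, in the spirit of the nested balanced-term reductions carried out in the proof of Theorem \ref{presentation}. Throughout, the commutativity $[\xb{m},\xa{1}{n}]=0$ from (\ref{commutator2}) is what allows $\xa{1}{-\frac{1}{4}}^{m_{1}}\xb{-1}^{m_{2}}$ to be treated as a well-defined monomial block without ordering ambiguity.
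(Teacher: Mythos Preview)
Your overall inductive scheme matches the paper's: locate a relation in $J$ whose reordered form (Lemma~\ref{relationreorder}) isolates the target monomial, and absorb the remaining summands by induction. The relation you select for the base of your secondary induction, $R^{0}(\a_{1},\a_{1}+\a_{2},m_{2}-1\mid (k+3m_{2})/4)$, specializes at $m_{2}=1$ to exactly the relation the paper uses. The essential gap is your bare assertion that $c_{1}\neq 0$. This is not a formality; it is the entire content of the paper's proof. For the prototype case $m_{2}=1$ the paper expands $R\bigl(\a_{1},\a_{1}+\a_{2},0\mid (k+3)/4\bigr)$ and tracks which mode tuples can contribute to the coefficient of $\xa{1}{-\frac{1}{4}}^{k-1}\xb{-1}$ after commuting into ordered form: they are precisely the permutations of $\bigl(-\tfrac{3}{4}-\tfrac{a}{2},-\tfrac{1}{4},\dots,-\tfrac{1}{4},-\tfrac{1}{4}+\tfrac{a}{2}\bigr)$ for $a\ge 0$, and each class contributes a term $(-1)^{a}\tfrac{k(k+1)}{4}B_{a}$, where $B_{a}$ counts the inner sums of $R$ in which such a permutation occurs. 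One then argues that the $B_{a}$ form a finite, strictly decreasing sequence, so this alternating sum cannot vanish. Without this combinatorial argument (or an equivalent mechanism), your two-term identity could in principle degenerate to a single term, rendering the inductive step vacuous.

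The remainder of your proposal---the nested induction and the deduction of (\ref{rewrite2})---is essentially bookkeeping, and both you and the paper leave the full induction over general $(m_{1},m_{2})$ implicit. Your candid acknowledgement of the ``main obstacle'' for $m_{1}+2m_{2}<k+1$ is honest, but the plan you sketch there is too vague to count as a resolution; note in particular that every generator of $J$ has charge at least $k+1$, so in that regime no single reordered relation can contain $\xa{1}{-\frac{1}{4}}^{m_{1}}\xb{-1}^{m_{2}}$ as a summand, and the reduction must proceed by a genuinely different route than ``higher values of $t$.''
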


\begin{proof}
 We will focus on establishing (\ref{rewrite1}) with $m_1=k+1$ and $m_2=1$ as the general case will follow similarly via induction. This will follow from the claim that we may write 
 \be 
 R\left(\a_1,\a_1+\a_2,0\left|\frac{k+3}{4}\right.\right)=A\xa{1}{-\frac{3}{4}}\xa{1}{-\frac{1}{4}}^k+B\xa{1}{-\frac{1}{4}}^{k-1}\xb{-1},
 \ee
where $A,B\neq 0$. It is clear that we may write this expression in this form with $A\neq 0$ using the commutations relations so we will focus on proving that $B\neq 0$. Observe that before any reordering we may write 
\be 
 R\left(\a_1,\a_1+\a_2,0\left|\frac{k+3}{4}\right.\right)=\sum_{m_1+\cdots+m_{k+1}=-\frac{k+3}{4}}A_{\mathbf{m}}\xa{1}{m_1}\cdots\xa{1}{m_{k+1}}\ee
We now analyze which choices of $\mathbf{m}=(m_1,\dots,m_{k+1})$ will allow the monomial $\xa{1}{m_1}\cdots\xa{1}{m_{k+1}}$ to commute and produce a $\xa{1}{-\frac{1}{4}}^{k-1}\xb{-1}$ term. All such tuples will be permutations of $(-\frac{3}{4}-\frac{a}{2},-\frac{1}{4},\dots,-\frac{1}{4},-\frac{1}{4}+\frac{a}{2})$ for $a\geq 0$.

We focus first on the case when $a=0$ that is, permutations of $\left(-\frac{3}{4},-\frac{1}{4},\dots,-\frac{1}{4}\right)$ of which there are exactly $k+1$ depending on our choice of placement of $-\frac{3}{4}$. Notice that we have 
\be \nonumber
\xa{1}{-\frac{1}{4}}^{n_1}\xa{1}{-\frac{3}{4}}\xa{1}{-\frac{1}{4}}^{n_2}=\xa{1}{-\frac{3}{4}}\xa{1}{-\frac{1}{4}}^{k}+\frac{n_1}{2} \xa{1}{-\frac{1}{4}}^{k-1}\xb{-1}\ee
and so that rewriting these terms will account the inclusion of $\frac{k(k+1)}{4}\xa{1}{-\frac{1}{4}}\xb{-1}$.

We now move on to the case when $a\geq 1$ and notice that for all such permutations that are such that $-\frac{3}{4}-\frac{a}{2}$ is to the right of $-\frac{1}{4}+\frac{a}{2}$, applying the commutation relation will produce exactly one $\frac{(-1)^a}{2}\xb{-1}$ term. We can easily count these tuples as follows. Given the general form 
\be \nonumber
\left(\mathbf{m}^{'},-\frac{3}{4}-\frac{a}{2},-\frac{1}{4},\dots,-\frac{1}{4}\right)\ee
where there are $r$ trailing $-\frac{1}{4}$-terms and $\mathbf{m}^{'}$ is a $k-r$ tuple that is a permutation of $(-\frac{1}{4}+\frac{a}{2},-\frac{1}{4},\dots,-\frac{1}{4})$, of which there are exactly $k-r$. This amounts to a total of $\frac{k(k+1)}{2}$ permutations of  $(-\frac{3}{4}-\frac{a}{2},-\frac{1}{4},\dots,-\frac{1}{4},-\frac{1}{4}+\frac{a}{2})$ for $a\geq 1$ which gives us an additional $(-1)^a\frac{k(k+1)}{4}\xb{-1}$ terms upon applying commutation relations. 

Putting this all together we may write 
 \be\begin{aligned} 
 R\left(\a_1,\a_1+\a_2,0\left|\frac{k+3}{4}\right.\right)&=A\xa{1}{-\frac{3}{4}}\xa{1}{-\frac{1}{4}}^k\\&+\frac{k(k+1)}{4}\sum_{a\geq 0}(-1)^aB_a\xa{1}{-\frac{1}{4}}^{k-1}\xb{-1},
\end{aligned} \ee
where $B_a$ is the number of inner sums of the expression in its initial form where a permutation of $(-\frac{3}{4}-\frac{a}{2},-\frac{1}{4},\dots,-\frac{1}{4},-\frac{1}{4}+\frac{a}{2})$ occurs. It is clear that this sum will be finite and that the $B_a$ are decreasing, finishing our proof.

\end{proof}

Recall the maps (\ref{e_alpha_k}) and (\ref{Y_operator}). By using the previous proposition we can prove:

 \begin{thm}\label{exact}The following sequences are exact:
\begin{equation} \label{sequence1}
0\to W_{k,0} \xrightarrow{e_{\alpha_1}^{\otimes k}}W_{k,0}\xrightarrow{\mathcal{Y}_1}W_{k,1}\to 0
\ee
and
\begin{equation} \label{sequence2}
0\to W_{k,0} \xrightarrow{e_{\alpha_1}^{\otimes k}}W_{k,k}\to 0.
\ee
\end{thm}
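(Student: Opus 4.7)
The plan is to reduce both sequences to algebraic statements about the ideals $I_{k,i}$ using the presentation from Theorem~\ref{presentation}, the identity $\tau_{\gamma,\theta}(I_{k,k})=I_{k,0}$ from Lemma~\ref{lemma_ideals}, the rewriting of $I_{k,i}$ in Proposition~\ref{ideal_rewrite}, and the explicit formulas (\ref{e_alpha_map}) and (\ref{Y_operator}).

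I would first handle the second sequence, which asserts that $e_{\alpha_1}^{\otimes k}$ is a linear isomorphism of $W_{k,0}$ onto $W_{k,k}$. Surjectivity is immediate from (\ref{e_alpha_map}): for any $b\in U(\overline{\mathfrak{n}}[\hat{\nu}])$, the element $b\cdot v_{k,k}$ is, up to a nonzero scalar, the image of $\tau_{\gamma,\theta}(b)\cdot v_{k,0}$, and $\tau_{\gamma,\theta}$ is an automorphism of $U(\overline{\mathfrak{n}}[\hat{\nu}])$. For injectivity, $e_{\alpha_1}^{\otimes k}(a\cdot v_{k,0})=0$ forces $\tau^{-1}_{\gamma,\theta}(a)\in\mathrm{Ker}\,f_{k,k}=I_{k,k}$ by Theorem~\ref{presentation}, so Lemma~\ref{lemma_ideals} gives $a\in\tau_{\gamma,\theta}(I_{k,k})=I_{k,0}=\mathrm{Ker}\,f_{k,0}$ and hence $a\cdot v_{k,0}=0$.

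For the first sequence the injectivity of the left-hand arrow is the same argument. Surjectivity of $\mathcal{Y}_1$ onto $W_{k,1}$ I would verify directly from (\ref{Y_operator}) and the identity $\Delta_c^T(\lambda_1,-x)(a\cdot 1)=\tau_{\gamma,\theta}(a)\cdot 1$: acting on the first tensor factor, $\mathcal{Y}_1(a\cdot v_{k,0})$ equals, up to a fixed permutation of tensor factors identifying a cyclic generator and up to nonzero constants, an element of the form $a'\cdot v_{k,1}$ for a suitable $a'$ depending on $a$ via $\tau_{\gamma,\theta}^{\pm 1}$, and bijectivity of this twist delivers surjectivity onto $W_{k,1}$. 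The inclusion $\mathrm{Im}\,e_{\alpha_1}^{\otimes k}\subset\mathrm{Ker}\,\mathcal{Y}_1$ should come out of the annihilation $e_{\alpha_1}^2=0$ occurring at the lattice level in the first tensor slot once $\Delta_c^T(-\lambda_1,x)$ has shifted an $e_{\alpha_1}$ into place.

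The main obstacle will be the reverse inclusion $\mathrm{Ker}\,\mathcal{Y}_1\subset\mathrm{Im}\,e_{\alpha_1}^{\otimes k}$. Applying Proposition~\ref{ideal_rewrite} I would write $I_{k,0}=J+U(\overline{\mathfrak{n}}[\hat{\nu}])\xa{1}{-\frac{1}{4}}^{k+1}$ and $I_{k,1}=J+U(\overline{\mathfrak{n}}[\hat{\nu}])\xa{1}{-\frac{1}{4}}^{k}$, so that the natural surjection $W_{k,0}\twoheadrightarrow W_{k,1}$ has kernel equal, modulo $J$, to the image of $\xa{1}{-\frac{1}{4}}^{k}$. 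Matching this submodule with $\mathrm{Im}\,e_{\alpha_1}^{\otimes k}$ under the iso of the second sequence will require tracking how $\tau_{\gamma,\theta}^{-1}$ transports the generator $v_{k,k}$ back into $W_{k,0}$ and how $\mathcal{Y}_1$ interacts with the diagonal action on $W_{k,0}$; I expect this step, modelled on the nested bookkeeping in the proof of Theorem~\ref{presentation}, to absorb most of the work.
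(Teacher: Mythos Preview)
Your treatment of the second sequence is fine and matches the paper. The trouble is in the first sequence, where you miss the single organizing fact that makes everything short: $\mathcal{Y}_1$ is a $U(\overline{\mathfrak{n}}[\hat{\nu}])$-module map sending $v_{k,0}$ to (a nonzero multiple of) $v_{k,1}$, so that $\mathcal{Y}_1(a\cdot v_{k,0})=a\cdot v_{k,1}$ for all $a$. Once you have this, both inclusions are immediate; the paper's proof occupies only a few lines.

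Concretely, your argument for $\mathrm{Im}\,e_{\alpha_1}^{\otimes k}\subset\mathrm{Ker}\,\mathcal{Y}_1$ via ``$e_{\alpha_1}^2=0$ at the lattice level'' fails: the operator $e_{\alpha_1}$ on $V_L^T$ is invertible, not nilpotent. The paper instead uses the identity $e_{\alpha_1}^{\otimes k}(a\cdot v_{k,0})=\tau_{\gamma,\theta}^{-1}(a)\,\xa{1}{-\tfrac14}^{k}\cdot v_{k,0}$ together with $\mathcal{Y}_1(b\cdot v_{k,0})=b\cdot v_{k,1}$ to get
\[
\mathcal{Y}_1\bigl(e_{\alpha_1}^{\otimes k}(a\cdot v_{k,0})\bigr)=\tau_{\gamma,\theta}^{-1}(a)\,\xa{1}{-\tfrac14}^{k}\cdot v_{k,1}=0,
\]
the last step because $\xa{1}{-\tfrac14}^{k}\in I_{k,1}$. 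For the reverse inclusion you correctly point to Proposition~\ref{ideal_rewrite}, but there is no ``nested bookkeeping'' to do: if $\mathcal{Y}_1(a\cdot v_{k,0})=a\cdot v_{k,1}=0$, then $a\in I_{k,1}=J+U(\overline{\mathfrak{n}}[\hat{\nu}])\xa{1}{-\tfrac14}^{k}$, so $a=b+c\,\xa{1}{-\tfrac14}^{k}$ with $b\in J$, and
\[
a\cdot v_{k,0}=c\,\xa{1}{-\tfrac14}^{k}\cdot v_{k,0}=e_{\alpha_1}^{\otimes k}\bigl(\tau_{\gamma,\theta}(c)\cdot v_{k,0}\bigr)\in \mathrm{Im}\,e_{\alpha_1}^{\otimes k}.
\]
That is the whole argument.
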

\begin{proof}
First, it is clear that $e_{\alpha_1}^{\otimes k}$ is an injection and that $\mathcal{Y}_1$ is a surjection. 
It is easy to see that Im$e_{\alpha_1}^{\otimes k} \subset $ Ker $\mathcal{Y}_1$, since for any $a \in  U(\overline{\mathfrak{n}}[\hat{\nu}])$ we have
\begin{align}
\mathcal{Y}_1(e_{\alpha_1}^{\otimes k}(a \cdot v_{k,0})) &= \mathcal{Y}_1(\tau^{-1}_{\gamma,\theta}(a) \xa{1}{-\frac{1}{4}}^k \cdot v_{k,0})\\
&=\tau^{-1}_{\gamma,\theta}(a) \xa{1}{-\frac{1}{4}}^k \cdot v_{k,1}\\
&= 0.
\end{align}

Now, suppose $a \cdot v_{k,0} \in \mbox{Ker } \mathcal{Y}_1$ for some $a \in  U(\overline{\mathfrak{n}}[\hat{\nu}])$. Then we have that 
\be
a \cdot v_{k,1} = 0,
\ee
and so 
\be
a \in \mbox{Ker }f_{k,1} = I_{k,1} =J+U(\overline{\mathfrak{n}}[\hat{\nu}])\xa{1}{-\frac{1}{4}}^{k}
\ee
by Proposition \ref{ideal_rewrite}. So, we may write
\be
a = b + c \xa{1}{-\frac{1}{4}}^k
\ee
for some $b \in J$ and $c \in U(\overline{\mathfrak{n}}[\hat{\nu}])$. We have
\be
a \cdot v_{k,0} =  \left (b + c \xa{1}{-\frac{1}{4}}^k \right ) \cdot v_{k,0} = c \xa{1}{-\frac{1}{4}}^k \cdot v_{k,0} =  e_{\alpha_1}^{\otimes k} (\tau_{\gamma, \theta}(c)\cdot v_{k,0}),
\ee
and thus, $a \cdot v_{k,0} \in \mbox{Im } e_{\alpha_1}^{\otimes k}$. Hence $\mbox{Ker }\mathcal{Y}_1 \subset \mbox{Im } e_{\alpha_1}^{\otimes k}$, completing the proof.

The exactness of the second sequence follows by using Lemma \ref{lemma_ideals}.
\end{proof}

We define the character (or multigraded dimension) of $W_{k,i}$ by
\begin{equation}
\chi_{W_{k,i}}(x,q) = {\rm tr}|_{W_{k,i}}x^{\alpha_1+\alpha_2}q^{L^{\hat{\nu}}(0)}
\end{equation}
From these exact sequences we have the following recursions on the characters of the virtual subspaces:
\begin{cor}
\be \label{recursion1}
\chi_{W_{k,0}}'(x,q) = \chi_{W_{k,1}}'(x,q) + x^k q^{\frac{k}{4}} \chi_{W_{k,0}}'(xq^{\frac{1}{2}},q)
\ee
and
\be\label{0krecursion}
\chi_{W_{k,k}}'(x,q) = \chi_{W_{k,0}}'(xq^{\frac{1}{2}},q)
\ee
where $\chi_{W_{k,i}}'(x,q) = \sum_{r,s \in \frac{1}{4}\mathbb{Z}} \mbox{dim}({W_{k,i}}_{(r,s)})x^rq^s$ is appropriately shifted to be contained in $\mathbb{C}[[x,q^{\frac{1}{4}}]]$.
\end{cor}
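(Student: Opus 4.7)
The plan is to derive both recursions by applying additivity of bigraded characters to the short exact sequences of Theorem \ref{exact}, after identifying exactly how the connecting maps shift the bigrading by $(\alpha_1+\alpha_2)$-charge and $L^{\hat{\nu}}(0)$-weight. Throughout I would work with $\chi'$ normalized so that each highest weight vector $v_{k,i}$ contributes trivially (as in the statement of the corollary), so that only the action on the affine generators needs to be tracked.

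The key preliminary computation is the bigrading shift induced by $\tau^{-1}_{\gamma,\theta}$. The generators $x_{\alpha_1}^{\hat{\nu}}(m)$ and $x_{\alpha_1+\alpha_2}^{\hat{\nu}}(n)$ carry $(\alpha_1+\alpha_2)$-charges $1$ and $2$ respectively, and $L^{\hat{\nu}}(0)$-weights $-m$ and $-n$. Using the explicit formulas recorded above Lemma \ref{lemma_ideals}, we have
\[
\tau^{-1}_{\gamma,\theta}\bigl(x_{\alpha_1}^{\hat{\nu}}(m)\bigr) = i\, x_{\alpha_1}^{\hat{\nu}}(m-\tfrac{1}{2}), \qquad \tau^{-1}_{\gamma,\theta}\bigl(x_{\alpha_1+\alpha_2}^{\hat{\nu}}(n)\bigr) = x_{\alpha_1+\alpha_2}^{\hat{\nu}}(n-1).
\]
Hence $\tau^{-1}_{\gamma,\theta}$ preserves charge and increases weight by $\tfrac{1}{2}$ times the charge on any monomial. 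Under the bigraded character, this is exactly the substitution $x \mapsto xq^{1/2}$.

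For the second recursion, the isomorphism (\ref{sequence2}) sends $a \cdot v_{k,0} \mapsto \tau^{-1}_{\gamma,\theta}(a) \cdot v_{k,k}$ by (\ref{e_alpha_map}), and the shift analysis immediately gives $\chi'_{W_{k,k}}(x,q) = \chi'_{W_{k,0}}(xq^{1/2}, q)$. For the first, the computation inside the proof of Theorem \ref{exact} shows that $e_{\alpha_1}^{\otimes k}: W_{k,0} \to W_{k,0}$ in (\ref{sequence1}) acts by $a \cdot v_{k,0} \mapsto \tau^{-1}_{\gamma,\theta}(a)\, x_{\alpha_1}^{\hat{\nu}}(-1/4)^k \cdot v_{k,0}$. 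The extra factor $x_{\alpha_1}^{\hat{\nu}}(-1/4)^k$ has charge $k$ and weight $k/4$, contributing $x^k q^{k/4}$, so the image is bigraded isomorphic to $W_{k,0}$ with character $x^k q^{k/4}\, \chi'_{W_{k,0}}(xq^{1/2}, q)$. Additivity of characters on (\ref{sequence1}) then yields the first recursion.

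The main bookkeeping concern is verifying that both $e_{\alpha_1}^{\otimes k}$ and $\mathcal{Y}_1$ are bigrading-preserving up to these claimed shifts, so that the exact sequences of Theorem \ref{exact} are in fact exact sequences of bigraded vector spaces. For $e_{\alpha_1}^{\otimes k}$ this follows from the explicit formula (\ref{e_alpha_map}) (or its analogue used in (\ref{sequence1})) combined with the $\tau^{-1}_{\gamma,\theta}$-shift above; for $\mathcal{Y}_1$ it follows from its definition (\ref{Y_operator}) as a tensor factor of $e_{\alpha_1} \circ \Delta_c^T(-\lambda_1,x)$, whose net effect modulo normalization of highest weight vectors is simply to change the basepoint from $v_{k,0}$ to $v_{k,1}$, as seen in the proof of Theorem \ref{exact}.
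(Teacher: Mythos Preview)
Your proof is correct and follows essentially the same approach as the paper: you read off the bigrading shift of $e_{\alpha_1}^{\otimes k}$ (via $\tau^{-1}_{\gamma,\theta}$ and the extra $x_{\alpha_1}^{\hat{\nu}}(-1/4)^k$ factor) and of $\mathcal{Y}_1$, then apply additivity of characters to the exact sequences of Theorem~\ref{exact}, which is precisely what the paper does when it rewrites (\ref{sequence1}) in bigraded form as $0\to {W_{k,0}}_{(r-k,\,s-\frac{1}{2}r+\frac{1}{4}k)}\to {W_{k,0}}_{(r,s)}\to {W_{k,1}}_{(r,s)}\to 0$. Your explicit computation of the $\tau^{-1}_{\gamma,\theta}$-shift as ``charge-preserving, weight $\mapsto$ weight $+\tfrac{1}{2}\cdot$charge'' is in fact more detailed than the paper's own proof, which simply asserts the bigraded sequence and the resulting character identity.
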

\begin{proof}
The first equation follows immediately from the short exact sequence (\ref{sequence1}) that can be  written as:
\begin{equation}
0\to {W_{k,0}}_{(r-k,s-\frac{1}{2}r + \frac{1}{4} k)} \xrightarrow{e_{\alpha_1}^{\otimes k}}{W_{k,0}}_{(r,s)}\xrightarrow{\mathcal{Y}_1}{W_{k,1}}_{(r,s)}\to 0,
\ee
and from which we have
\be
\sum _{r, s \in \frac{1}{4} \mathbb{Z}} \mbox{dim}({W_{k,0}}_{(r,s)})x^rq^s =\sum_{r, s \in \frac{1}{4} \mathbb{Z}} \mbox{dim}({W_{k,1}}_{(r,s)}) x^rq^s+\sum _{r, s \in \frac{1}{4} \mathbb{Z}} \mbox{dim}({W_{k,0}}_{(r-k,s-\frac{1}{2}r + \frac{1}{4}k)}) x^rq^s.
\ee
The second equation in the claim follows similarly by using (\ref{sequence2}). 
\end{proof}

\begin{rem}
When setting $k=1$, the above results recover the exact sequences and recursions found in [CalLM4], where the character of $W_{1,0}$ was defined using $4L^{\nu}(0)$ to measure the conformal weights. When $k > 1$, these results do not yield a full set of recursions or the full characters of the principal subspace or virtual subspaces.
\end{rem}

In general, we conjecture that a full set of recursions will require a more general set of virtual subspace or longer exact sequences involving the other level $k$ standard modules for $A_2^{(2)}$. We now define generalizations of the virtual subspaces $W_{k,i}$ as follows:
\[
W_{k,i, j}^{\alpha_1, \alpha_1+\alpha_2}=U ( \overline{\mathfrak{n}}[\hat{\nu}])  \cdot v_{k,i,j}, \]
where 
\[
v_{k,i,j}= \underbrace{1\ \otimes \cdots \otimes 1}_{k-i-j \; \; \mbox{times}} \otimes \underbrace{ e_{\alpha_1} \otimes \cdots \otimes e_{\alpha_1}}_{i \; \; \mbox{times}} \otimes \underbrace{ x_{\alpha_1+\alpha_2}(-1) \cdot 1 \otimes x_{\alpha_1+\alpha_2}(-1) \cdot 1}_{j \; \; \mbox{times}},
\]
and $0 \leq i, j \leq k$ such that $i+j\le k$. When $j=0$ these are the spaces $W_{k,i}$. 

We conjecture the following: 
\begin{conj}
For any $i$ with $0 \leq i \leq k-1$, the following sequences are exact:
\end{conj}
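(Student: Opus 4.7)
The plan is to generalize the technique of Theorem \ref{exact}. As a prerequisite, I would first establish a presentation theorem for the generalized virtual subspaces $W_{k,i,j}^{\alpha_1,\alpha_1+\alpha_2}$ analogous to Theorem \ref{presentation}. Define an ideal $I_{k,i,j} \subset U(\overline{\mathfrak{n}}[\hat{\nu}])$ that extends $I_{k,i}$ by replacing the generators $x_{\alpha_1}^{\hat{\nu}}(-1/4)^\ell x_{\alpha_1+\alpha_2}^{\hat{\nu}}(-1)^{k+1-i-\ell}$ with generators of degree $k+1-i-j$ that reflect the additional $x_{\alpha_1+\alpha_2}(-1)\cdot 1$ factors in the highest-weight vector $v_{k,i,j}$. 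The inclusion $I_{k,i,j}\subset \mbox{Ker}\, f_{k,i,j}$ will follow from Theorem \ref{RelThm} extended to these modules; the reverse inclusion should be proved via the minimal-counterexample/nested-induction scheme of Theorem \ref{presentation}, now descending on the pair $(i,j)$ and using a suitably generalized balancing argument of the type captured by Lemma \ref{relationreorder}.

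With these presentations in hand, the plan is to construct the maps of the conjectured exact sequences as generalizations of $e_{\alpha_1}^{\otimes k}$ and $\mathcal{Y}_i$. The natural candidate for the injection is obtained by inserting an extra $x_{\alpha_1}^{\hat{\nu}}(-1/4)$ into a designated slot of the tensor product (after pre-composing with $\tau_{\gamma,\theta}^{-1}$), with injectivity guaranteed by the injectivity of $e_{\alpha_1}$ on $V_L^T$ together with the diagonal action (\ref{diagonal_action}). The natural candidate for the surjection is a modification of $\mathcal{Y}_i$ built from the constant term $\Delta_c^T(\lambda_1,-x)$ placed in a slot not occupied by $x_{\alpha_1+\alpha_2}(-1)\cdot 1$; surjectivity should follow directly from (\ref{e_alpha_map}) and the analog of the argument showing that $\mathcal{Y}_1$ is surjective in the proof of Theorem \ref{exact}. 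Exactness at the middle term would then be established as in Theorem \ref{exact}: for $v = a \cdot v_{k,i,j}$ annihilated by the surjection, use $\mbox{Ker}\, f_{k,i+1,j} = I_{k,i+1,j}$ to write $a = b + \sum c_\ell \cdot (\text{extra generator}_\ell)$ with $b \in I_{k,i,j}$, then rewrite the sum using an analog of Proposition \ref{ideal_rewrite} to isolate the single extra generator corresponding to the image of the injection.

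The principal obstacle will be establishing the generalized presentation theorem. The proof of Theorem \ref{presentation} already required careful bookkeeping of two nested inductions built around the balanced-term analysis of Lemma \ref{relationreorder} and the compatibility results (\ref{i_inclusion})/Lemma \ref{inclusion_ideals} relating ideals at different values of $i$. With a third parameter $j$, the extra generators mix powers of $x_{\alpha_1}^{\hat{\nu}}(-1/4)$ with powers of $x_{\alpha_1+\alpha_2}^{\hat{\nu}}(-1)$ in more subtle ways, and the identification of the unique most balanced summand of $R^0(\alpha_1,\alpha_1+\alpha_2,r\,|\,t)$ needed to clear a given extra generator becomes more delicate. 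One must in particular verify that the coefficient of the balanced term remains nonzero (in the spirit of the $B_a$-counting argument in the proof of Proposition \ref{ideal_rewrite}) and that the inductive step matches the freshly introduced extra generator to a legitimate element of $I_{k,i,j}$ modulo lower-order contributions in $U(\overline{\mathfrak{n}}[\hat{\nu}])\overline{\mathfrak{n}}[\hat{\nu}]_+$. Once this combinatorial bookkeeping is settled, the exact sequences themselves should fall out by essentially the same argument used for Theorem \ref{exact}.
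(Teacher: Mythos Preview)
The statement you are proposing to prove is explicitly labeled \emph{Conjecture} in the paper, and the paper offers no proof of it whatsoever; the authors present it only as a conjectural generalization of Theorem \ref{exact}, supported by the special case $i=0$ and by the character recursions. There is therefore no ``paper's own proof'' to compare your proposal against.

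What you have written is not a proof but a research plan. The plan is broadly sensible: the route via a generalized presentation theorem for $W_{k,i,j}^{\alpha_1,\alpha_1+\alpha_2}$ followed by the exactness argument of Theorem \ref{exact} is the natural strategy, and you have correctly identified the main obstacle as the presentation theorem itself. However, you should be aware that several of the steps you describe as routine are not. First, you must check that the map $e_{\alpha_1}^{\otimes k}$ actually carries $W_{k,i,0}$ into $W_{k,0,i}$: on highest-weight vectors it sends $v_{k,i,0}$ to (a scalar multiple of) $e_{\alpha_1}^{\otimes(k-i)}\otimes (x_{\alpha_1+\alpha_2}^{\hat{\nu}}(-1)\cdot 1)^{\otimes i}$, using (\ref{commutator_dot_1}), but this vector is not obviously of the form $a\cdot v_{k,0,i}$ for $a\in U(\overline{\mathfrak{n}}[\hat{\nu}])$, and establishing that it lies in $W_{k,0,i}$ already requires information tantamount to a presentation. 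Second, the analog of Proposition \ref{ideal_rewrite} in the presence of the extra parameter $j$ is genuinely new: the proof there relies on a delicate cancellation argument for the coefficient $B$, and with $j>0$ the relevant relation $R^0(\alpha_1,\alpha_1+\alpha_2,r\,|\,t)$ changes shape in a way that the paper does not analyze. Until these two points are settled, the proposal remains a plausible outline rather than a proof.
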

\begin{equation}
0\to W_{k,i,0}^{\alpha_1, \alpha_1+\alpha_2} \xrightarrow{e_{\alpha_1}^{\otimes k}}W_{k,0, i}^{\alpha_1, \alpha_1+\alpha_2}\xrightarrow{\mathcal{Y}_{i+1}}W_{k,1, i}^{\alpha_1, \alpha_1+\alpha_2} \to 0.
\ee
We also have the following system of recursions satisfied by their characters:
\be
\chi'_{W_{k,0, i}^{\alpha_1, \alpha_1+\alpha_2}} (x,q)= \chi'_{W_{k,1, i}^{\alpha_1, \alpha_1+\alpha_2}}(x,q)+ x^{k-i}q^{\frac{1}{4} (k-i)} \chi'_{W_{k,i,0}^{\alpha_1, \alpha_1+\alpha_2}}(x,q).
\ee
If $i=0$ we recover (\ref{sequence1}) and (\ref{recursion1}).  


Using experimental evidence to compute basis elements of $W_{k,0}$ from the presentation of $W_{k,0}$, we have the following conjecture:
\begin{conj}
For each integer $k > 0$, we have that:
\begin{equation}
\chi_{W_{k,0}}'(1,q^4) = \sum_{r_1,\dots,r_k \ge 0} \frac{q^{B^{st}r_sr_t}}{(q^2;q^2)_{r_1} \dots (q^2;q^2)_{r_k}}
\end{equation}
where $B^{st} = min \{ s,t\}$ for $1 \le s,t \le k$. Thus,
\begin{equation} \label{character}
\chi_{W_{k,0}}'(1,q^4) = \sum_{r_1,\dots,r_k \ge 0} \frac{q^{{\bf r}^tT_k^{-1}{\bf r}}}{(q^2;q^2)_{r_1} \dots (q^2;q^2)_{r_k}},
\end{equation}
where $T_k$ is the tadpole Cartan matrix of rank k, ${\bf r}=(r_1, \dots, r_k)$ and $(q^2;q^2)_n=\prod_{j=0}^{n-1} (1-q^{2+2j})$.

In particular, if $k=2m$ is even, $m$ a positive integer, we have that $\chi_{W_{k,0}}'(1,q^4)$ is the generating function for the following Gollnitz-Gordon-Andrews identity:
\begin{equation}
\chi_{W_{k,0}}'(1,q^4) =  \prod_{\substack{n\geq 1,\,n\not\equiv 2\,(\mathrm{mod}\,4), \\ n\not\equiv 0,\pm(2m+1)\,(\mathrm{mod}\,4m+4)}} (1-q^n)^{-1}.
\end{equation}
\end{conj}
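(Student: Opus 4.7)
The plan is to first establish the generalized exact sequences of Conjecture 1, then iterate the resulting system of $q$-difference equations to express $\chi'_{W_{k,0}}$ as a Nahm sum, and finally invoke the classical G\"ollnitz-Gordon-Andrews identity to obtain the product form in the even case.

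\emph{Step 1: presentations of the generalized virtual subspaces.} I would extend Theorem \ref{presentation} by proving a presentation of each $W_{k,i,j}^{\alpha_1,\alpha_1+\alpha_2}$ as $U(\bar{\mathfrak{n}}[\hat{\nu}])/I_{k,i,j}$, where $I_{k,i,j}$ augments $I_{k,i}$ with extra generators encoding the $x_{\alpha_1+\alpha_2}^{\hat{\nu}}(-1)^j$ factors in the highest weight vector $v_{k,i,j}$. These new generators should take the form of shifted monomials analogous to those of $I_{k,i}$ (with shifts coming from applying $\tau_{\gamma,\theta}$ to $x_{\alpha_1+\alpha_2}^{\hat{\nu}}(-1)$). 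The proof would adapt the nested induction of Theorem \ref{presentation} by descending on $i$ through the virtual subspaces and inductively homogenizing the mixed $x_{\alpha_1}^{\hat{\nu}}(-1/4)^m x_{\alpha_1+\alpha_2}^{\hat{\nu}}(-1)^n$ terms via the ``balancing'' of Lemma \ref{relationreorder}, now with an extra outer induction descending on $j$.

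\emph{Step 2: closed system of recursions.} With these presentations in hand, injectivity of $e_{\alpha_1}^{\otimes k}$ combined with an $\mathcal{Y}_{i+1}$-preimage calculation modeled on Theorem \ref{exact} yields the exact sequences of Conjecture 1. Translating as in (\ref{recursion1}) gives the system
\[
\chi'_{W_{k,0,i}^{\alpha_1,\alpha_1+\alpha_2}}(x,q) = \chi'_{W_{k,1,i}^{\alpha_1,\alpha_1+\alpha_2}}(x,q) + x^{k-i}q^{(k-i)/4}\chi'_{W_{k,i,0}^{\alpha_1,\alpha_1+\alpha_2}}(x,q),
\]
which, together with the natural analogues of (\ref{0krecursion}), forms a finite closed system. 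Iterating this system in the style of the Feigin-Stoyanovsky arguments found in the untwisted higher-level case (see \cite{CalLM2}, \cite{S1}) peels off one strip $x_{\alpha_1}^{\hat{\nu}}(-1/4)^{k-i}$ at each stage; each strip contributes a power $x^{k-i} q^{(k-i)/4}$ and a $q^{1/2}$-rescaling of the remaining character. After specialization $x=1$ and $q \mapsto q^4$, careful bookkeeping of the quadratic buildup in the exponent and of the geometric $(q^2;q^2)_{r_j}^{-1}$ factors that arise from summing the iterated strips should reproduce exactly $\sum_{s,t}\min(s,t)r_sr_t$ and the denominators of the conjectured Nahm sum.

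\emph{Step 3: product form for even $k$.} For $k=2m$, applying the change of variables $R_j = r_j + r_{j+1} + \cdots + r_k$ uses the identity $\sum_{s,t}\min(s,t)r_sr_t = R_1^2+\cdots+R_k^2$ to rewrite the Nahm sum in Andrews-Gordon form
\[
\sum_{R_1 \geq \cdots \geq R_k \geq 0} \frac{q^{R_1^2 + \cdots + R_k^2}}{\prod_{j=1}^{k}(q^2;q^2)_{R_j - R_{j+1}}} \qquad (R_{k+1}=0),
\]
at which point the infinite product follows from the classical G\"ollnitz-Gordon-Andrews partition identity.

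\emph{Main obstacle.} The hardest step is Step 1. The new factors $x_{\alpha_1+\alpha_2}^{\hat{\nu}}(-1)^j$ in $v_{k,i,j}$ contribute their own charge and weight, so during the ``balanced monomial'' analysis driving the contradiction at the end of Theorem \ref{presentation} there are new competing summands produced by the commutators between the $x_{\alpha_1}^{\hat{\nu}}$ and $x_{\alpha_1+\alpha_2}^{\hat{\nu}}$ families. A secondary difficulty is that the conjectured system of recursions may not in fact close without introducing still more general subspaces (for instance, with left factors of $x_{\alpha_1}^{\hat{\nu}}(-1/4)$ applied to $v_{k,i,j}$), in which case Step 2 would require a further extension of Conjecture 1 before the iteration can be carried out to its Nahm-sum conclusion.
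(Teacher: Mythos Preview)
The statement you are attempting to prove is \emph{Conjecture~2} in the paper, not a theorem: the paper offers no proof whatsoever, only the remark that the formula is supported by ``experimental evidence to compute basis elements of $W_{k,0}$ from the presentation of $W_{k,0}$.'' So there is nothing to compare your proposal to; your outline is an attack on an open problem, not a reconstruction of an existing argument.

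That said, your plan is broadly the one the authors themselves gesture at: they explicitly state that ``a full set of recursions will require a more general set of virtual subspaces or longer exact sequences,'' introduce the spaces $W_{k,i,j}^{\alpha_1,\alpha_1+\alpha_2}$, and pose the exact sequences you invoke in Step~2 as \emph{Conjecture~1}. Your proposal therefore stacks one conjecture on top of another. You are candid about this in your ``Main obstacle'' paragraph, and both difficulties you flag are real. First, the nested induction of Theorem~\ref{presentation} does not obviously extend to the $j>0$ case: the vectors $v_{k,i,j}$ are no longer pure tensors of $1$'s and $e_{\alpha_1}$'s, so the key trick of pulling back through $e_{\alpha_1}^{\otimes k}$ and collapsing via $\mathcal{Y}$-operators needs a genuinely new ingredient. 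Second, even granting Conjecture~1, the displayed recursions relate $W_{k,0,i}$, $W_{k,1,i}$, and $W_{k,i,0}$; to iterate you also need to pass from $W_{k,1,i}$ back to spaces already in the system, and no analogue of (\ref{0krecursion}) for general $j$ is stated or proved. Your Step~2 assertion that the system ``forms a finite closed system'' is thus unjustified as written.

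In short: your strategy is the natural one and matches the paper's own speculation, but it is a program, not a proof, and the paper does not claim otherwise.
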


The formula  (\ref{character}) is the analogue of the formula for the character of $W_{k,0}$ in the  $A_1^{(1)}$ case, which is given by the Nahm sum of the inverse of the Cartan matrix of $A_k$ (\cite{CLM2}).

\vspace{.4in}

\noindent{\small \sc Department of Mathematics, The Graduate Center and New York City College
  of Technology, City University of New York, New York, NY 10016}\\
{\em E--mail address}: ccalinescu@citytech.cuny.edu

\vspace{.3in}

\noindent{\small \sc Department of Mathematics, Randolph College, Lynchburg, VA 24503}\\
{\em E--mail address:} mpenn@randolphcollege.edu

\vspace{.3in}

\noindent{\small \sc Department of Mathematics and Computer Science, Ursinus College, Collegeville, PA 19426}\\
{\em E--mail address:} csadowski@ursinus.edu

\end{document}